\newtheorem{lemma}{Lemma}
\newtheorem{definition}{Definition}
\newtheorem{proposition}{Proposition}
\theoremstyle{definition}
\def \mb{\mathbb}
\def \Z{\mb Z}                  
\def \R{\mb R}                 
\def \vp{\varphi}       
\newcommand {\csn} {\text{csn}}
\newcommand {\arccsn} {\text{arccsn}}
\newcommand {\sn} {\text{sn}}
\newcommand {\ctn} {\text{ctn}}
\newcommand {\csct} {\text{csct}}
\def \S{\mb S}        
\def \H{\mb H}        
\def\v{{\bf v}}
\newcommand {\q} {\mathbf{q}}
\newcommand {\F} {\mathbf{F}}
\def \and{\mbox{and}}
\title{The $N$-Body Problem in Spaces with Uniformly Varying Curvature}
\begin{document}
\maketitle
\markboth{Eric Boulter, Florin Diacu, and Shuqiang Zhu}{The $N$-body problem in spaces with uniformly varying curvature}
\author{\begin{center}
{\bf Eric Boulter}$^2$, {\bf Florin Diacu}$^{1,2}$, {\bf Shuqiang Zhu$^2$}\\
\bigskip
{\footnotesize $^1$Pacific Institute for the Mathematical Sciences\\
and\\
$^2$Department of Mathematics and Statistics\\
University of Victoria\\
P.O.~Box 1700 STN CSC\\
Victoria, BC, Canada, V8W 2Y2\\
\bigskip
boulter2@uvic.ca, diacu@uvic.ca, zhus@uvic.ca\\
}
\end{center}


\begin{abstract}
We generalize the curved $N$-body problem to spheres and hyperbolic spheres whose curvature $\kappa$ varies in time.
Unlike in the particular case when the curvature is constant, the equations of motion are non-autonomous. We first briefly consider the analogue of the Kepler problem and then investigate the homographic orbits for any number of bodies, proving the existence of several such classes of solutions on spheres. Allowing the curvature to vary in time offers some insight into the effect of an expanding universe, in the context the curved $N$-body problem, when $\kappa$ satisfies Hubble's law. The study of these equations also opens the possibility of finding new connections between classical mechanics and general relativity.
\end{abstract}



{
\tableofcontents

}


\section{Introduction}

In the 1830s, J\'anos Bolyai and Nikolai Lobachevsky independently thought that the laws of physics depend on the geometry of the universe, so they sought a natural extension of gravity to hyperbolic space, \cite{Bolyai}, \cite{Lobachevsky}. This idea led to the study of the Kepler problem and the 2-body problem in the framework of hyperbolic and elliptic geometry. Unlike in Euclidean space, the equations describing them are not equivalent, since the latter system is not integrable, \cite{Shchepetilov}. More recently, the problem was generalized to any number $N$ of bodies, leading to works such as \cite{Diacu}, \cite{Diacu2}, \cite {Diacu3}, \cite{Diacu5}, \cite{Diacu51}, \cite{DiacuKordlou}, \cite{Diacu6}, \cite{Diacu7}, \cite{DiacuPChavelaGRVictoria}, \cite{DiacuSanchez}, \cite{DiacuZhu}, \cite{DiacuThorn}, \cite{Garcia}, \cite{Martinez1}, \cite{PChavelaGRVictoria}, \cite{Shchepetilov}, \cite{Tibboel1}, \cite{Tibboel2}, \cite{Tibboel3}, and \cite{Zhu}. In the light of Hubble's law, \cite{Hubble}, a non-flat universe (i.e.\ a 3-sphere or a hyperbolic 3-sphere) would have uniformly varying curvature $\kappa=\kappa(t)$ as the universe expands, meaning that at a given time the curvature is the same at every point. Therefore by modifying the equations of the curved $N$-body problem to allow for uniformly varying curvature, we can construct a gravitational model that accounts for an expanding universe without requiring general relativity. Of course, we do not claim that the model we will introduce here could replace general relativity in cosmological studies. We are mostly interested in the mathematical aspects of a curved $N$-body problem on expanding or contracting spheres and hyperbolic spheres, a problem that, to our knowledge, has not been considered before in the framework of classical mechanics.

We are not only deriving here the equations of motion of this $N$-body problem, but will also examine how the uniformly varying curvature affects the system's behaviour and the existence of certain solutions. In Section 2 we find the equations of the $N$-body problem with uniformly varying curvature on the variable 3-sphere, 
$$
\S^3_\kappa:=\S^3_\kappa(t)=\{(x,y,z,w) \in \R^4:x^2+y^2+z^2+w^2=\kappa^{-1}(t),\ \kappa(t)>0\},
$$ 
and the variable hyperbolic 3-sphere, 
$$
\H^3_\kappa:=\H^3_\kappa(t)=\{(x,y,z,w) \in \R^{3,1}: x^2+y^2+z^2-w^2=\kappa^{-1}(t),\ \kappa(t)<0\},
$$ 
where $\R^{3,1}$ is the Minkowski space, by generalizing the derivation of the curved $N$-body equations with cotangent potential, as done in \cite{Diacu}, and perform a changeH of coordinates to reduce the problem to the study of the motion projected onto the unit manifolds $\S^3$ and $\H^3$, respectively. We then seek the first integrals of the equations and find a Lagrangian for the projected coordinates.  In Section 3, we derive the equations of the Kepler problem, a two-body system where one body is fixed, and rule out some of the solutions typically expected in such problems. In Section 4 we first define the concept of homographic solution, which consists of orbits for which the geometric configuration of the particles remains similar to itself during the motion while the curvature of the space changes in time. We then show that such orbits exist in $\S^3_\kappa$, but not in $\H^3_\kappa$, and that the homographic solutions of $\S^3_\kappa$ correspond to the special central configurations studied in \cite{DiacuZhu} for $\kappa$ constant, orbits for which the forces acting on the bodies cancel each other if the system is initially at rest. This observation allows us to reduce the existence of homographic solutions in a variable 3-sphere to a problem in the constant curvature case, which we can consider, without any loss of generality, in the unit sphere $\S^3$. In Section 5, we find several new special central configurations in $\S^3$, some of which lie completely on a great sphere $\S^2$. Among these orbits are those that satisfy certain necessary and sufficient conditions for the existence of a 4-body special central configuration as well as for a 5-body special central configuration, namely pentatopes in $\S^3$, which are not contained on any great 2-sphere.

We would like to mention that the idea of introducing and studying this problem came to us from Sergio Benenti's book in progress, \cite{Ben}, which was pointed out to the attention of Florin Diacu by his former doctoral student Manuele Santoprete. In his manuscript, Benenti develops a remarkable axiomatic setting for isotropic cosmological models, considering the spaces $\S_\kappa(t)$ and $\H_\kappa(t)$ as defined above. However, he shows no interest in deriving the equations of motion of an $N$-body problem, focusing instead on some deep cosmological questions he treats with relativistic techniques.   

\section{Equations of Motion}

In order to study the $N$-body problem in spaces with uniformly varying curvature, it is first necessary to generalize the equations of motion from the  constant curvature case by applying the Euler-Lagrange equations to the Lagrangian used in \cite{Diacu}, where $\kappa$ is a non-zero differentiable function of time. The goal of this section is to obtain the new system of equations and its basic integrals of motion.

\subsection{Deriving the equations of motion}

Let the curvature $\kappa:[0, \infty) \to \R$ be a non-zero differentiable function of time. Take $\q=(\q_1,\ldots, \q_N)$, with $\q_i \in \R^4$, if $\kappa(t) > 0$, but the Minkowski 4-space $\R^{3,1}$, if $\kappa(t) < 0$.
We define the potential energy to be $-U_\kappa$, where $U_k$ is the force function\begin{equation}\label{equ:potential}
U_\kappa(\q)= \sum\limits_{1 \leq i < j \leq N} \frac{m_i m_j |\kappa|^{1/2} \kappa \q_i \cdot \q_j} {[\sigma-\sigma(\kappa \q_i \cdot \q_j)^2]^{1/2}},\end{equation}
$\sigma$ denotes the sign of $\kappa$, and $\cdot$ is the standard inner product for $\kappa>0$, but the Lorentz product $\q_i \cdot \q_j=x_ix_j+y_iy_j+z_iz_j-w_iw_j$ for $\kappa<0$. When $\kappa$ is constant, $U_\kappa$ offers the natural extension of Newton's law to curved spaces, see \cite{Diacu}. 

We define the kinetic energy as
\begin{equation}T_\kappa(\dot\q)= \frac{1}{2} \sum\limits_{i=1}^N m_i \dot{\q}_i \cdot \dot{\q}_i,\end{equation}
so the Lagrangian function is $L_\kappa = T_\kappa + U_\kappa$. Consequently we can obtain in the standard manner the Euler-Lagrange equations with holonomic constraints, 
\begin{equation}
\frac{d}{dt} \frac{\partial L_\kappa}{\partial \dot{\q}_i} - \frac{\partial L_\kappa}{\partial \q_i}-\lambda_\kappa^i \frac{\partial f_\kappa^i}{\partial \q_i} = 0, \ i=1,\ldots, N,
\end{equation}
where $\kappa \neq 0$ and  $f_\kappa^i = \q_i \cdot \q_i -\frac{1}{\kappa} =0,\ i=1,\dots, N,$ are the constraints that keep the particle system on $\S^3_\kappa$ or $\H^3_\kappa$, respectively. The above system then becomes 
\begin{equation}\label{Lagrange} 
m_i \ddot{\q}_i = \nabla_{\q_i} U_\kappa+2\lambda_\kappa^i \q_i, \ i=1,\dots, N.
\end{equation}
Dot-multiplying these equations by $\q_i$ leads to 
\begin{equation}
\label{dotted} 
m_i\ddot{\q}_i \cdot \q_i = \nabla_{\q_i} U_\kappa \cdot \q_i +2\lambda_\kappa^i \q_i \cdot \q_i, \ i=1,\dots, N.
\end{equation} 
Since $U_\kappa$ is a homogeneous function of degree 0, it follows by Euler's formula for homogeneous functions that $\nabla_{\q_i} U \cdot \q_i = 0$. As $f_\kappa^i = 0$, we also have 
$$
\dot{f}_\kappa^i = 2\dot{\q}_i\cdot \q_i +\frac{\dot{\kappa}}{\kappa^2} = 0, \ i=1,\dots, N,
$$ 
and 
$$
\ddot{f}_\kappa^i = 2\ddot{\q}_i \cdot \q_i + 2\dot{\q}_i \cdot \dot{\q}_i +\frac{\ddot{\kappa}}{\kappa^2} - 2 \frac{\dot{\kappa}^2}{\kappa^3}=0, \ i=1,\dots, N.
$$ 
Substituting these into (\ref{dotted}) gives \begin{equation*} 
-m_i \dot{\q}_i \cdot \dot{\q}_i -\frac{m_i\ddot{\kappa}}{2\kappa^2} + \frac{m_i\dot{\kappa}^2}{\kappa^3}=2\frac{\lambda_\kappa^i}{\kappa}, \ i=1,\dots, N,
\end{equation*}
so 
\begin{equation}
\label{lambda}
\lambda_\kappa^i = -\frac{m_i \kappa}{2} \dot{\q}_i \cdot \dot{\q}_i - \frac{m_i\ddot{\kappa}}{4\kappa} + \frac{m_i\dot{\kappa}^2}{2\kappa^2}, \ i=1,\dots, N.
\end{equation}
If we substitute (\ref{lambda}) into (\ref{Lagrange}), we obtain that \begin{equation} \label{sizeVary} m_i \ddot{\q}_i = \nabla_{\q_i} U_\kappa - m_i \kappa (\dot{\q}_i \cdot \dot{\q}_i)\q_i - m_i\frac{\ddot{\kappa}}{2\kappa}\q_i + m_i\frac{\dot{\kappa}^2}{\kappa^2}\q_i, \ i=1,\dots, N,
\end{equation}
with the constraints
\begin{equation} 
\kappa\q_i \cdot \q_i = 1,\ i=1,\dots, N,\ \kappa \neq 0.
\end{equation}
The change of variables $\q_i = |\kappa|^{-1/2} \overline{\q}_i$ projects the system on $\S^3$ and $\H^3$. We obtain
$$
\dot{\q}_i = -\frac{\sigma \dot{\kappa} \overline{\q}_i}{2|\kappa|^{3/2}} + \frac{\dot{\overline{\q}}_i}{|\kappa|^{1/2}},
$$  
$$
\ddot{\q}_i= -\frac{\sigma \ddot{\kappa} \overline{\q}_i}{2|\kappa|^{3/2}} + \frac{3\dot{\kappa}^2 \overline{\q}_i}{4 |\kappa|^{5/2}} - \frac{\sigma \dot{\kappa} \dot{\overline{\q}}_i}{|\kappa|^{3/2}} + \frac{\ddot{\overline{\q}}_i}{|\kappa|^{1/2}},
$$
and the equations of motion take the form 
\begin{align*}
m_i\ddot{\overline{\q}}_i = &|\kappa|^{3/2} \nabla_{\overline{\q_i}} \overline{U} -m_i\left(\frac{\sigma\dot{\kappa}^2}{4\kappa^2} \overline{\q}_i \cdot \overline{\q}_i - \frac{\dot{\kappa}}{|\kappa|}\overline{\q}_i \cdot \dot{\overline{\q}}_i + \sigma \dot{\overline{\q}}_i \cdot \dot{\overline{\q}}_i \right)\overline{\q}_i\\
&+ \frac{m_i\dot{\kappa}^2}{4\kappa^2} \overline{\q}_i+\frac{m_i \dot{\kappa} }{\kappa}\dot{\overline{\q}}_i, \ i=1,\dots, N.
\end{align*}
We can also write this system as 
\begin{equation}\label{fixed}
m_i\ddot{\overline{\q}}_i = |\kappa|^{3/2} \nabla_{\overline{\q}_i} \overline{U} - \sigma m_i ( \dot{\overline{\q}}_i \cdot \dot{\overline{\q}}_i)\overline{\q}_i + \frac{m_i\dot{\kappa}}{\kappa} \dot{\overline{\q}}_i,\ i=1,\dots, N,
\end{equation}
with constraints
\begin{equation}
 \overline{\q}_i \cdot \overline{\q}_i = \sigma,\ \overline{\q}_i \cdot \dot{\overline{\q}}_i = 0, \ i=1,\dots, N,\ \kappa \neq 0.
\end{equation}
In \cite{Diacu3, DiacuZhu}, the explicit form of $\nabla_{\overline{\q}_i} \overline{U}$
is written as
\begin{equation} \label{equ:F}
\nabla_{\overline{\q}_i} \overline{U}=\sum\limits_{j=1, j\neq i}^N \frac{m_im_j(\overline{\q}_j-\sigma (\overline{\q}_i \cdot \overline{\q}_j)\overline{\q}_i)}{(1-(\overline{\q}_i \cdot\overline{ \q}_j)^2)^{3/2}}=\sum\limits_{j=1, j\neq i}^N \frac{m_im_j(\overline{\q}_j-\csn d_{ij}\overline{\q}_i)}{\sn^3 d_{ij}}.
\end{equation}
where $ \sn (x)= \sin(x) \ {\rm or}\ \sinh(x), \ 
   \ \csn (x)= \cos(x) \ {\rm or}\ \cosh(x), $ and 
 $d_{ij}$ is the distance between $\overline{\q}_i$ and $\overline{\q}_j$ in $\S^3$ or $\H^3$.  It is $
    d_{ij}:=\arccsn
    (\sigma \overline{\q}_i\cdot\overline{\q}_j). 
    $
For future reference, we introduce more notations which unify the  trigonometric and hyperbolic  functions \cite{Diacu3}, 
\begin{align*}
 \ctn (x)&= \csn(x)/\sn(x)= \cot(x) \ {\rm or}\ \coth(x), \\ 
   \ \csct (x)&= 1/\sn(x)=1/\sin(x) \ {\rm or}\ 1/\sinh(x).  \end{align*} 
We will also use $\F_i=\nabla_{\overline{\q}_i} \overline{U}$ to indicate  that this term can be  viewed as the attraction force on $\overline{\q}_i$ be all other particles.

\subsection{Integrals of the total angular momentum}

We can obtain the integrals of the total angular momentum. Consider the wedge product of $\overline{\q}_i$ and  the $i^{th}$ equations of (\ref{fixed}). For detail of the wedge product $(\wedge)$, the reader can see p.31 of \cite{Diacu3}. Dividing by $|\kappa|$, and summing the equations over $i$, we obtain
$$
\sum\limits_{i=1}^N \frac{m_i}{|\kappa|} \ddot{\overline{\q}}_i \wedge \overline{\q}_i = \sum\limits_{i=1}^N |\kappa|^{1/2} (\nabla_{\overline{\q}_i} \overline{U}) \wedge \overline{\q}_i -\sum\limits_{i=1}^N \left[\frac{m_i}{\kappa} (\dot{\overline{\q}}_i \cdot \dot{\overline{\q}}_i)\overline{\q}_i \wedge \overline{\q}_i + \frac{ m_i\dot{\kappa}}{\sigma\kappa^2} \dot{\overline{\q}}_i \wedge \overline{\q}_i\right].
$$
Since the wedge product, $\wedge$, is skew-symmetric, $\sum\limits_{i=1}^N |\kappa|^{1/2} (\nabla_{\overline{\q}_i} \overline{U}) \wedge \overline{\q}_i =0$. 
Combining this property with the fact that $\overline{\q}_i \wedge \overline{\q}_i = 0$, we obtain that
\begin{equation} 
\label{angmom}
\sum\limits_{i=1}^N\left[\frac{m_i}{|\kappa|} \ddot{\overline{\q}}_i \wedge \overline{\q}_i - \frac{\sigma m_i \dot{\kappa}}{\kappa^2} \dot{\overline{\q}}_i \wedge \overline{\q}_i\right]=0.
\end{equation}
This is the negative of the time derivative of the system's angular momentum about the origin, $\mathbf{L} = \sum\limits_{i=1}^N \frac{m_i}{|\kappa|} \overline{\q}_i \wedge \dot{\overline{\q}}_i$, and provides the six integrals
\begin{align*}
L_{wx}&=\sum\limits_{i=1}^N \frac{m_i}{|\kappa|}(y_i\dot{z}_i-z_i\dot{y}_i), 
&& L_{wy}=\sum\limits_{i=1}^N \frac{m_i}{|\kappa|}(x_i\dot{z}_i-z_i\dot{x}_i).\\
L_{wz}&=\sum\limits_{i=1}^N \frac{m_i}{|\kappa|}(x_i\dot{y}_i-y_i\dot{x}_i), 
&& L_{xy}=\sum\limits_{i=1}^N \frac{m_i}{|\kappa|}(w_i\dot{z}_i-z_i\dot{w}_i),\\
L_{xz}&=\sum\limits_{i=1}^N \frac{m_i}{|\kappa|}(w_i\dot{y}_i-y_i\dot{w}_i), 
&& L_{yz}=\sum\limits_{i=1}^N \frac{m_i}{|\kappa|}(w_i\dot{x}_i-x_i\dot{w}_i).
\end{align*}

\subsection{The $\overline{\q}$-Lagrangian}

If we have a Lagrangian in both the normal and projected coordinates, we may obtain the projected equations without having to first derive the full equations. The Lagrangian of the system in $\overline{\q}$ coordinates is
\begin{equation}\label{barLagrange}
\overline{L}=\sum\limits_{i=1}^N\frac{m_i(\dot{\overline{\q}}_i\cdot \dot{\overline{\q}}_i)}{2|\kappa|}+\sum\limits_{1\leq i<j\leq N}|\kappa|^{1/2}\frac{\sigma m_j m_i (\overline{\q}_j \cdot \overline{\q}_i)}{(\sigma-\sigma (\overline{\q}_j \cdot \overline{\q}_i)^2)^{1/2}}\end{equation}
with constraints $f_i=\overline{\q}_i \cdot \overline{\q}_i-\sigma=0$. Applying the Euler-Lagrange equations in terms of the $\overline{\q}_i$s gives system (\ref{fixed}).

\section{The Kepler Problem}

The simplest system that can be derived from the $N$-body problem is the Kepler problem, which describes the gravitational motion of a point mass $m$ about a fixed point mass $M$. Without loss of generality, we will assume $M$ is fixed at position $N=(0,0,0,1)$ in the $\overline{\q}$ coordinates. 

We will use the 3-(hyperbolic)-spherical coordinates. 

To unify the notation of trigonometric and hyperbolic functions, we introduce the following functions

If we convert to 3-(hyperbolic)-spherical coordinates $(\alpha, \theta, \varphi)$ by taking 
$$\overline{\q}= (x,y,z, w)=(\sn{\alpha}\sin{\theta}\cos{\varphi}, \sn{\alpha}\sin{\theta}\sin{\varphi}, \sn{\alpha}\cos{\theta}, \csn{\alpha}),$$
 equation (\ref{barLagrange}) becomes \begin{equation}
\overline{L}=\frac{m(\dot{\alpha}^2+\dot{\theta}^2\sn^2{\alpha}+\dot{\varphi}^2\sn^2{\alpha}\sin^2{\theta})}{2|\kappa|}+|\kappa|^{1/2}mM\ctn{\alpha}\end{equation}
with no constraint. Then the conjugate momenta for the system are 
$$
p_\alpha=\frac{m\dot{\alpha}}{|\kappa|},\
p_\theta=\frac{m\dot{\theta}\sn^2{\alpha}}{|\kappa|},\
p_\varphi=\frac{m\dot{\varphi}\sn^2{\alpha}\sin^2{\theta}}{|\kappa|},
$$
so the Hamiltonian has the form 
\begin{equation}
H=\frac{|\kappa|}{2m}(p_\alpha^2+p_\theta^2\csct^2{\alpha}+p_\varphi^2\csct^2{\alpha}\csc^2{\theta})-|\kappa|^{1/2}mM\ctn{\alpha},\end{equation}
and the equations of motion become
\begin{align}
\dot{\alpha}&=\frac{\partial H}{\partial p_\alpha}=\frac{|\kappa|p_\alpha}{m}\\
\dot{\theta}&=\frac{\partial H}{\partial p_\theta}=\frac{|\kappa|p_\theta\csct^2{\alpha}}{m}\\
\dot{\varphi}&=\frac{\partial H}{\partial p_\varphi}=\frac{|\kappa|p_\varphi\csct^2{\alpha}\csc^2{\theta}}{m}\\
\dot{p}_\alpha&=-\frac{\partial H}{\partial \alpha}=\frac{|\kappa|\ctn{\alpha}\csct^2{\alpha}}{m}(p_\theta^2+p_\varphi^2\csc^2{\theta})-|\kappa|^{1/2}mM\csct^2{\alpha}\\
\dot{p}_\theta&=-\frac{\partial H}{\partial \theta}=\frac{|\kappa|\csct^2{\alpha}\csc^2{\theta}\cot{\theta}p_\varphi^2}{m}\label{ptheta}\\
\dot{p}_\varphi&=-\frac{\partial H}{\partial \varphi}=0.\label{pphi}
\end{align}
From (\ref{pphi}) we have $A=p_\varphi$ is a constant, and direct computation leads to
\begin{align*}
L_{wz} &= p_\varphi\\
L_{wx} &= p_\theta \sin{\varphi}+p_\varphi \cot{\theta} \cos{\varphi}\\
L_{wy} &= p_\theta \cos{\varphi}+p_\varphi \cot{\theta} \sin{\varphi},
\end{align*} we have that their square sum $L=p_\theta^2+p_\varphi^2\csc^2{\theta}$ is a constant. Using this property, we can eliminate (\ref{ptheta}) and (\ref{pphi}) and obtain the equations of motion in the form
\begin{align}
\dot{\alpha}&=\frac{|\kappa|p_\alpha}{m}\label{alpha}\\
\dot{\theta}&=\pm\frac{|\kappa|\sqrt{(L-A^2\csc^2{\theta})}\csct^2{\alpha}}{m}\label{theta}\\
\dot{\varphi}&=\frac{|\kappa|A\csct^2{\alpha}\csc^2{\theta}}{m}\label{phi}\\
\dot{p}_\alpha&=\frac{|\kappa|L\ctn{\alpha}\csct^2{\alpha}}{m}-|\kappa|^{1/2}mM\csct^2{\alpha}.\label{palpha}
\end{align}
If we substitute (\ref{alpha}) into (\ref{palpha}) we get the uncoupled second order equation \begin{equation}
\ddot{\alpha}=\frac{\kappa^2L\ctn{\alpha}\csct^2{\alpha}}{m^2}-|\kappa|^{3/2}M\csct^2{\alpha}+\frac{\dot{\kappa}\dot{\alpha}}{\kappa}.\end{equation}

\subsection{Necessary condition on $\kappa$ for circular solutions}

The simplest solution of the Kepler problem in the Euclidean and constant curvature cases is the circular solution, i.e.\ an orbit for which the moving mass is at a constant distance from the fixed mass. We find that such solutions do not exist for non-constant curvature.

\begin{proposition}Circular orbits occur only in systems with constant curvature.\end{proposition}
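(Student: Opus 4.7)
The plan is to take the definition of a circular orbit literally in the projected coordinates, namely that $\alpha(t) \equiv \alpha_0$ is constant in time (the angular distance from the fixed mass at $N=(0,0,0,1)$), and to exploit the decoupled second-order equation for $\alpha$ already derived at the end of the section. A circular orbit immediately forces $\dot{\alpha}=0$ and $\ddot{\alpha}=0$, so the $\frac{\dot\kappa\dot\alpha}{\kappa}$ correction term vanishes and the equation collapses to a purely algebraic relation between $\kappa$ and the conserved quantities.

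Specifically, substituting $\dot{\alpha}=\ddot{\alpha}=0$ into
\[
\ddot{\alpha}=\frac{\kappa^2 L\,\ctn{\alpha}\,\csct^2{\alpha}}{m^2}-|\kappa|^{3/2}M\,\csct^2{\alpha}+\frac{\dot{\kappa}\dot{\alpha}}{\kappa}
\]
and dividing by the nonvanishing factor $\csct^2{\alpha_0}$ should yield
\[
|\kappa|^{3/2}M=\frac{\kappa^2\,L\,\ctn{\alpha_0}}{m^2}.
\]
Since $\kappa^2=|\kappa|^2$, this simplifies to $|\kappa|^{1/2}= \frac{m^2 M}{L\,\ctn{\alpha_0}}$. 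The right-hand side is a time-independent quantity: $m$, $M$, $\alpha_0$ are constants by assumption, and $L=p_\theta^2+p_\varphi^2\csc^2\theta$ was identified as a first integral in the derivation of (\ref{alpha})--(\ref{palpha}). Since $\kappa$ does not change sign on its domain, $|\kappa|^{1/2}$ constant forces $\kappa$ itself to be constant, which is the desired conclusion.

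The only small point to check carefully is that $L\neq 0$, so that the displayed equation can be solved for $|\kappa|^{1/2}$ rather than producing a contradiction. If $L=0$, the relation reduces to $|\kappa|^{3/2}M\,\csct^2{\alpha_0}=0$, which is impossible for a genuine Kepler problem ($M\neq 0$, $\kappa\neq 0$, $\alpha_0\neq 0,\pi$). Hence every circular orbit must have $L>0$, and the division leading to $|\kappa|^{1/2}=\text{const}$ is legitimate.

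The argument has no real obstacle beyond this nondegeneracy check; the conceptual content is that the $\alpha$-equation, once stripped of velocity terms by the circularity hypothesis, is a static balance between the centripetal-type contribution $\kappa^2 L\,\ctn\alpha/m^2$ and the attraction $|\kappa|^{3/2}M$, whose explicit $\kappa$-dependence forces $\kappa$ to be time-independent. The same idea, adapted to the many-body equation (\ref{fixed}), is what will make homographic solutions in Section 4 reduce to central configurations in the constant-curvature case.
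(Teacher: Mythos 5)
Your proof is correct and follows essentially the same route as the paper: the paper deduces $p_\alpha\equiv 0$ from (\ref{alpha}) and then reads off the algebraic balance from (\ref{palpha}), which is exactly the relation you obtain by setting $\dot\alpha=\ddot\alpha=0$ in the uncoupled second-order equation, since that equation is nothing but (\ref{alpha}) substituted into (\ref{palpha}). Your explicit nondegeneracy check that $L\neq 0$ (so the balance can actually be solved for $|\kappa|^{1/2}$) is a point the paper leaves implicit, but it does not change the argument.
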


\begin{proof}
Obviously, a circular orbit occurs when $\alpha$ has a constant value throughout the motion. By (\ref{alpha}), in order for this to be the case, we must have $p_\alpha=0$. But then, by (\ref{palpha}), $0=\frac{1}{m}|\kappa|L\ctn{\alpha}\csct^2{\alpha} - |\kappa|^{1/2}mM\csct^2{\alpha}$. If we isolate $\kappa$ we find that $$|\kappa|=\biggl(\frac{m^2M}{L\ctn{\alpha} }\biggr)^2.$$
Since the righthand  side consists only of constants, it follows that the system has circular orbits only if $\kappa$ does not depend on time.\end{proof}

We can also prove the following related result.
\begin{proposition}A system has non-fixed $T$-periodic solutions in phase space for some $T>0$ only if $\kappa$ is $T$-periodic.\end{proposition}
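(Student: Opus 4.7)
The plan is to extract $|\kappa|$ algebraically from the equations of motion (\ref{alpha})--(\ref{palpha}) by evaluating them at $t$ and $t+T$ and subtracting. Along a $T$-periodic orbit each of $\alpha,\theta,\varphi,p_\alpha,p_\theta,p_\varphi$ and each of their derivatives is $T$-periodic, and the first integrals $A=p_\varphi$ and $L=p_\theta^{2}+p_\varphi^{2}\csc^{2}\theta$ take the same value at $t$ and $t+T$. The endpoint is periodicity of $|\kappa|$; because $\kappa$ is continuous and never zero it has constant sign, so periodicity of $|\kappa|$ immediately lifts to periodicity of $\kappa$.

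The cleanest extraction is via (\ref{phi}): comparing at $t$ and $t+T$ forces
\begin{equation*}
\bigl(|\kappa(t+T)|-|\kappa(t)|\bigr)\,A\,\csct^{2}\alpha(t)\,\csc^{2}\theta(t)=0,
\end{equation*}
and since $\csct\alpha$ and $\csc\theta$ never vanish, the generic case $A\neq 0$ is done. When $A=0$, $L$ reduces to the nonnegative constant $p_\theta^{2}$, and continuity together with $p_\theta^{2}\equiv L$ forces $p_\theta$ to be constant; if $L>0$ this constant is nonzero with a fixed sign, so the $\pm$ in (\ref{theta}) is constant, and the same comparison applied to (\ref{theta}) gives periodicity of $|\kappa|$ after dividing by the nonvanishing factor $\sqrt{L}\csct^{2}\alpha(t)$.

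The only remaining case is $A=L=0$, in which $p_\theta\equiv p_\varphi\equiv 0$ and both $\theta$ and $\varphi$ are constants; then the angular equations (\ref{theta})--(\ref{phi}) collapse to $0=0$ and carry no information about $\kappa$. Here the key move is to observe that (\ref{palpha}) simplifies to $\dot p_\alpha=-|\kappa|^{1/2}mM\csct^{2}\alpha$. Comparing at $t$ and $t+T$ and dividing by the nonvanishing $mM\csct^{2}\alpha(t)$ yields $|\kappa(t+T)|^{1/2}=|\kappa(t)|^{1/2}$, whence $|\kappa|$ is $T$-periodic.

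The main obstacle I expect is precisely this radial case $A=L=0$: everywhere else $|\kappa|$ appears linearly in an angular evolution equation and can be isolated immediately, but when all angular momentum vanishes one has to turn to (\ref{palpha}) and recognize that its $|\kappa|^{1/2}$ Newton term is what rescues the argument. The non-fixed hypothesis serves to rule out trivial equilibrium orbits, ensuring the statement is not vacuous.
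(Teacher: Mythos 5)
Your proof is correct, and for the two cases the paper treats it proceeds by exactly the same mechanism: compare the Hamilton equations at $t$ and $t+T$, use (\ref{phi}) when $A\neq 0$ and (\ref{theta}) when $A=0$, then lift $T$-periodicity of $|\kappa|$ to $\kappa$ via the constant-sign-of-$\kappa$ argument. The genuine difference is your third case, and it is a real improvement: the paper's $A=0$ branch divides by $\sqrt{L}$ without comment, so its argument is vacuous when $A=L=0$, i.e.\ for purely radial motion with $p_\theta\equiv p_\varphi\equiv 0$, where (\ref{theta}) and (\ref{phi}) degenerate to $0=0$ and carry no information about $\kappa$ — a gap the paper's proof simply does not address. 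Your appeal to (\ref{palpha}), which for $L=0$ reads $\dot p_\alpha=-|\kappa|^{1/2}mM\csct^2\alpha$, closes it: $T$-periodicity of $p_\alpha$ gives $T$-periodicity of $\dot p_\alpha$, and dividing by the nonvanishing factor $mM\csct^2\alpha(t)$ yields $|\kappa(t+T)|^{1/2}=|\kappa(t)|^{1/2}$. Note that your case split is genuinely necessary there: applying (\ref{palpha}) alone for general $L$ only gives $(u-v)\bigl[(u+v)L\ctn\alpha/m-mM\bigr]=0$ with $u=|\kappa(t+T)|^{1/2}$, $v=|\kappa(t)|^{1/2}$, and on the sphere the bracket can vanish, so the angular equations must be used whenever they are nondegenerate, as you do. Your remark that $p_\theta=\pm\sqrt{L}$ has constant sign (fixing the $\pm$ in (\ref{theta})) also tidies a point the paper leaves implicit, though even a hypothetical sign flip would force the impossible $|\kappa(t+T)|+|\kappa(t)|=0$. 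In short: same strategy as the paper, but your version is complete where the published proof has a (fixable) hole.
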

\smallskip

\begin{proof}
Let $(\alpha(t), \varphi(t), \theta(t), p_\alpha)$ be a solution to the curved Kepler problem with curvature $\kappa(t)$, such that for every $t \in [0, \infty)$, we have
\begin{align*}
\alpha(t+T)&=\alpha(t), && p_\alpha(t+T)=p_\alpha(t),\\
\theta(t+T)&=\theta(t), && \varphi(t+T)=\varphi(t)+2n\pi,
\end{align*}
for some $T \in \R, n \in \Z$. If $A\neq 0$, then by (\ref{phi}) $$0=\dot{\varphi}(t+T)-\dot{\varphi}(t)=\frac{(|\kappa(t+T)|-|\kappa(t)|) A\cdot\csct^2{\alpha(t)}\csc^2{\theta(t)}}{m},$$
so $|\kappa(t+T)|=|\kappa(t)|$. Since $\kappa$ is continuous and non-zero, there are no $t_1, t_2$ such that $\kappa(t_1)=-\kappa(t_2)$, so $\kappa(t+T)=\kappa(t)$.

If $A=0$, then by (\ref{theta}) $$0=\dot{\theta}(t+T)-\dot{\theta}(t)=\pm\frac{(|\kappa(t+T)|-|\kappa(t)|)\sqrt{L}\cdot\csct^2{\alpha(t)}}{m},$$
so by the same argument as above, $\kappa(t+T)=\kappa(t)$. Therefore $T$-periodic solutions occur only when $\kappa$ is $T$-periodic.
\end{proof}

\section{Homographic Orbits}
In this section, we study a class of rigid motions (rigid motions in the projected $\S^3$ or $\H^3$). We found that they exist in $\S^3$ but not in $\H^3$ and that they are related to special central configurations which was introduced in \cite{Diacu3, DiacuZhu}. 

\subsection{Homographic orbits in $\S^3$}
In $\S^3$, a solution in the form  $A^{-1}e^{\xi(t)}A$ is called a homographic orbits, where $A$ is a constant matrix in  $SO(4)$, and $$\xi(t)=\begin{bmatrix}0 & -\alpha(t) & 0 & 0\\ \alpha(t) & 0 & 0 & 0\\ 0 & 0 & 0 & -\beta(t)\\ 0 & 0 & \beta(t) & 0\end{bmatrix}, $$
$\alpha(t), \beta(t) \in C^1(\R),\ \alpha(0)= \beta(0)=0.$  Since the equations (\ref{fixed}) with $\kappa >0$ are invariant under the $SO(4)$-action, it is sufficient to consider the case $A=id_{SO(4)}$.

\begin{definition}[\cite{DiacuZhu}]
Consider the masses $m_1,\dots, m_N>0$ in $\S^3$. Then
a configuration
$$
\overline{\q}=(\overline{\q}_1, \overline{\q}_2 \ldots \overline{\q}_N),\ \overline{\q}_i=(x_i,y_i,z_i,w_i), \ i=1,...,N,
$$
is called a special central configuration if it is a critical point of the force function $\overline{U}$, i.e.
\begin{equation*}
\nabla_{\overline{\q}_i}\overline{U}(\overline{\q})=0,\ i=\overline{1,N}.
\end{equation*} 
\end{definition}

In $\S^3$, special central configurations leads to fixed-point solutions.
The next result shows that homographic orbits can be derived from the special central configurations in $\S^3$, meaning that finding homographic solutions on spheres with variable curvature is equivalent to finding fixed-point solutions in the unite sphere $\S^3$.

\begin{proposition}
Let $\overline{\q}(t)=(\overline{\q}_1(t), \overline{\q}_2(t) \ldots \overline{\q}_N(t))$ be a homographic motion of $\mathbb{S}^3_\kappa$. Then $\overline{\q}(t)$ is a solution to the $N$-body problem with time varying curvature if and only if $\overline{\q}$ is a special central configuration and $\overline{\q}_i(t)=e^{\xi(t)}\overline{\q}_i$ for $i=1,2,\ldots, N$, where \begin{equation} \xi(t)=\begin{bmatrix} 0 & -cK(t) & 0 & 0\\ cK(t) & 0 & 0 & 0\\ 0 & 0 & 0 & \pm cK(t)\\ 0 & 0 & \mp cK(t) & 0\end{bmatrix}, c \in \R, \label{xi}\end{equation}
with $K(t)=\int_0^t \kappa(\tau)d\tau$.
\end{proposition}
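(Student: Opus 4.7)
The plan is to substitute the homographic ansatz $\overline{\q}_i(t) = e^{\xi(t)}\overline{\q}_i$ (with $\xi(t)$ in the general block form from the start of Section~4) into (\ref{fixed}) with $\sigma = 1$ and extract the conditions on $\xi$ and on the initial configuration. The block structure of $\xi(t)$ implies that $\xi$ and $\dot\xi$ commute (each $2\times 2$ skew-symmetric block is a multiple of the standard $J$), and both commute with $e^{\xi(t)}$. Consequently $\dot{\overline{\q}}_i = \dot\xi\,\overline{\q}_i(t)$ and $\ddot{\overline{\q}}_i = (\ddot\xi + \dot\xi^2)\overline{\q}_i(t)$. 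Because $e^{\xi(t)} \in SO(4)$ preserves the Euclidean inner product on $\R^4$, the force function $\overline{U}$ is invariant along the motion, $\F_i(\overline{\q}(t)) = e^{\xi(t)}\F_i(\overline{\q})$, and $\dot{\overline{\q}}_i \cdot \dot{\overline{\q}}_i$ depends only on the initial configuration.

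Applying $e^{-\xi(t)}$ to (\ref{fixed}) pulls the equation back to the initial data:
\begin{equation*}
m_i\bigl(\ddot\xi + \dot\xi^2 - \tfrac{\dot\kappa}{\kappa}\dot\xi\bigr)\overline{\q}_i + m_i\bigl(\dot\xi\overline{\q}_i\cdot\dot\xi\overline{\q}_i\bigr)\overline{\q}_i \;=\; \kappa^{3/2}\F_i(\overline{\q}).
\end{equation*}
Next I decompose $\overline{\q}_i = P_1 + P_2$ by block, with $P_1 = (x_i,y_i,0,0)$, $P_2 = (0,0,z_i,w_i)$, $r_i^2 = \|P_1\|^2$, $s_i^2 = \|P_2\|^2$, $r_i^2 + s_i^2 = 1$, and introduce $P_1^\perp = (-y_i,x_i,0,0)$, $P_2^\perp = (0,0,-w_i,z_i)$; the four vectors are pairwise orthogonal whenever nonzero. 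Short computations give $\dot\xi\overline{\q}_i = \dot\alpha P_1^\perp + \dot\beta P_2^\perp$, $\dot\xi^2\overline{\q}_i = -\dot\alpha^2 P_1 - \dot\beta^2 P_2$, and $\dot\xi\overline{\q}_i \cdot \dot\xi\overline{\q}_i = \dot\alpha^2 r_i^2 + \dot\beta^2 s_i^2$. Collecting terms, the left-hand side becomes
\begin{equation*}
m_i\Bigl[(\ddot\alpha - \tfrac{\dot\kappa}{\kappa}\dot\alpha)P_1^\perp + (\ddot\beta - \tfrac{\dot\kappa}{\kappa}\dot\beta)P_2^\perp - s_i^2(\dot\alpha^2 - \dot\beta^2)P_1 + r_i^2(\dot\alpha^2 - \dot\beta^2)P_2\Bigr].
\end{equation*}

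Equating to $\kappa^{3/2}\F_i$ and projecting onto each of the four orthogonal directions yields four scalar identities per $i$. For instance, the $P_1^\perp$-projection gives $m_i r_i^2(\ddot\alpha - \tfrac{\dot\kappa}{\kappa}\dot\alpha) = \kappa^{3/2}(\F_i\cdot P_1^\perp)$, which cleanly separates the purely $t$-dependent factor on the left from the purely $i$-dependent factor on the right. In a model with genuinely time-varying $\kappa$, the only way these identities can hold for every $t$ and every $i$ is that both sides vanish; together with the analogous conclusions for the $P_2^\perp$, $P_1$, $P_2$ projections and Euler's identity $\F_i\cdot\overline{\q}_i = 0$ (degree-zero homogeneity), this forces $\F_i\cdot P_1 = \F_i\cdot P_2 = \F_i\cdot P_1^\perp = \F_i\cdot P_2^\perp = 0$, hence $\F_i = 0$, so $\overline{\q}$ is a special central configuration, together with the three ODEs $\ddot\alpha = \tfrac{\dot\kappa}{\kappa}\dot\alpha$, $\ddot\beta = \tfrac{\dot\kappa}{\kappa}\dot\beta$, and $\dot\alpha^2 = \dot\beta^2$.

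Integrating the first ODE under $\alpha(0) = 0$ gives $\dot\alpha = c\kappa$, hence $\alpha(t) = cK(t)$; similarly $\beta(t) = c'K(t)$, and $\dot\alpha^2 = \dot\beta^2$ forces $c' = \pm c$, yielding exactly the form of $\xi$ claimed. The converse is immediate: if $\overline{\q}$ is a special central configuration and $\alpha,\beta$ are of the stated form, then $\F_i = 0$ and each of $\ddot\alpha - \tfrac{\dot\kappa}{\kappa}\dot\alpha$, $\ddot\beta - \tfrac{\dot\kappa}{\kappa}\dot\beta$, $\dot\alpha^2 - \dot\beta^2$ vanishes, so both sides of every projected identity are identically zero and the ansatz solves (\ref{fixed}). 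The main obstacle is precisely the separation step in the ``only if'' direction: one has to argue that an identity of the shape (function of $t$)$\cdot$(function of $i$) $=$ $\kappa^{3/2}(t)\cdot$(another function of $i$) can hold for a non-constant $\kappa$ only when both sides vanish, since otherwise the ratio $(\F_i\cdot P_1^\perp)/(m_i r_i^2)$ would have to be the same nonzero constant across $i$, and $\alpha$ would then have to satisfy an inhomogeneous ODE $\ddot\alpha - \tfrac{\dot\kappa}{\kappa}\dot\alpha = C\kappa^{3/2}$ whose solution is incompatible with the homographic form $\alpha = cK(t)$.
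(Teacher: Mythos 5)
Your computational setup---pulling the equation back by $e^{-\xi(t)}$ and decomposing along $P_1, P_2, P_1^\perp, P_2^\perp$---is correct, and your four projected identities are exactly right (a small slip aside: $\dot{\overline{\q}}_i\cdot\dot{\overline{\q}}_i=\dot\alpha^2 r_i^2+\dot\beta^2 s_i^2$ does depend on $t$, as your own later formula shows). The genuine gap is in the separation step of the ``only if'' direction: from $m_i r_i^2\bigl(\ddot\alpha-\tfrac{\dot\kappa}{\kappa}\dot\alpha\bigr)=\kappa^{3/2}\,\F_i\cdot P_1^\perp$ alone you cannot conclude that both sides vanish. If the ratios $(\F_i\cdot P_1^\perp)/(m_i r_i^2)$ all equal some nonzero constant $C$, the identity is satisfied by any $\alpha$ solving the inhomogeneous ODE $\ddot\alpha-\tfrac{\dot\kappa}{\kappa}\dot\alpha=C\kappa^{3/2}$, and such solutions exist for every $\kappa$, constant or not. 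Your closing justification---that this ODE is ``incompatible with the homographic form $\alpha=cK(t)$''---is circular: the definition of a homographic orbit only requires $\alpha,\beta\in C^1$ with $\alpha(0)=\beta(0)=0$, and $\alpha=cK(t)$ is precisely the conclusion to be proved, so it cannot be used to rule out the inhomogeneous ODE.

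What closes the gap is a conserved quantity, which is how the paper proceeds: the angular momentum integrals $L_{xy}=\tfrac{\dot\alpha}{\kappa}\sum_i m_i(x_i^2+y_i^2)$ and $L_{zw}=\tfrac{\dot\beta}{\kappa}\sum_i m_i(z_i^2+w_i^2)$ must be constant along any solution, forcing either the degenerate alternatives ($x_i=y_i=0$ for all $i$, resp.\ $z_i=w_i=0$ for all $i$) or $\alpha=aK(t)$, $\beta=bK(t)$; only after this does substituting back---and using that $\kappa^{3/2}$ and $\kappa^2$ are independent functions when $\kappa$ is non-constant---yield $a=\pm b$ and $\F_i=0$. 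Equivalently, you could repair the argument inside your own framework: summing your $P_1^\perp$-identity over $i$ and using the invariance of $\overline{U}$ under the one-parameter rotation group generated by the first block (so that $\sum_i \F_i\cdot P_1^\perp=0$, the total torque) gives $\bigl(\ddot\alpha-\tfrac{\dot\kappa}{\kappa}\dot\alpha\bigr)\sum_i m_i r_i^2=0$, which kills the inhomogeneous term whenever some $r_i\neq 0$---this is the angular-momentum argument in disguise and is the missing ingredient. Note also that the degenerate cases you wave away are real: if all bodies lie on the great circle $\{x=y=0\}$, then $P_1=P_1^\perp=0$, the condition $\dot\alpha^2=\dot\beta^2$ is \emph{not} forced, and $\alpha$ is arbitrary; the proposition survives only because the first rotation block then acts trivially on the configuration, a situation the paper treats explicitly as its cases (1) and (2).
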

\begin{proof}
Let $\overline{\q}_i(t)=e^{\xi(t)}\overline{\q}_i$, where $\overline{\q}$ is a special central configuration, and $\xi(t)$ is defined as in (\ref{xi}). Then equations (\ref{fixed}) become 
\begin{equation*}
m_i(\ddot{\xi}(t)+\dot{\xi}^2(t))\overline{\q}_i=-m_i(\dot{\xi}(t)\overline{\q}_i \cdot \dot{\xi}(t)\overline{\q}_i)\overline{\q}_i+\frac{m_i\dot{\kappa}(t)}{\kappa(t)}\dot{\xi}(t)\overline{\q}_i,
\end{equation*}
since $\dot{\xi}(t)$ commutes with $e^{\xi(t)}$, and $\nabla_{\overline{\q}_i} U=0$. If we notice that 
$$
\dot{\xi}^2(t)\overline{\q}_i=-c^2\kappa^2(t)\overline{\q}_i,
\ \ \frac{\dot{\kappa}(t)}{\kappa(t)}\dot{\xi}(t)=\ddot{\xi}(t),
$$ 
then we have 
\begin{align*}
m_i\ddot{\xi}(t)\overline{\q}_i-m_ic^2\kappa^2(t)\overline{\q}_i&=-m_ic^2\kappa^2(x_i^2+y_i^2+z_i^2+w_i^2)\overline{\q}_i+m_i\ddot{\xi}(t)\overline{\q}_i\\
&=m_i\ddot{\xi}(t)\overline{\q}_i-m_ic^2\kappa^2(t)\overline{\q}_i,\end{align*}
so $\overline{\q}(t)$ is a solution to the $N$-body problem with time varying curvature when $\kappa >0$.
\smallskip

Conversely, suppose $\overline{\q}(t)=(\overline{\q}_1(t), \overline{\q}_2(t) \ldots \overline{\q}_N(t))$ is a solution to the $N$-body problem with uniformly varying positive curvature that is a homographic orbit in $\mathbb{S}^3_\kappa$. Then $$\overline{\q}_i(t)=\begin{bmatrix}x_i(t)\\ y_i(t) \\ z_i(t) \\ w_i(t)\end{bmatrix}=\begin{bmatrix}x_i \cos(\alpha(t))-y_i\sin(\alpha(t))\\ x_i\sin(\alpha(t))+y_i\cos(\alpha(t))\\ z_i\cos(\beta(t))-w_i\sin(\beta(t))\\ z_i\sin(\beta(t))+w_i\cos(\beta(t))\end{bmatrix},$$
where $\overline{\q}_i(0)=(x_i, y_i, z_i, w_i)^T$, and $\alpha,\beta$ are real differentiable functions such that $\alpha(0)=\beta(0)=0$. Notice that \begin{align*}
\dot{x}_i(t)&=-\dot{\alpha}(t)y_i(t),\\
\dot{y}_i(t)&=\dot{\alpha}(t)x_i(t),\\
\dot{z}_i(t)&=-\dot{\beta}(t)w_i(t),\\
\dot{w}_i(t)&=\dot{\beta}(t)z_i(t).
\end{align*}
If we look at the angular momentum integrals in the $xy$ and $zw$ directions, we find that 
\begin{align*}
L_{xy}&=\frac{1}{\kappa(t)}\sum\limits_{i=1}^N m_i(x_i(t)\dot{y}_i(t)-\dot{x}_i(t)y_i(t))\\
&=\frac{\dot{\alpha}(t)}{\kappa(t)}\sum\limits_{i=1}^Nm_i \bigl((x_i\cos(\alpha(t))-y_i\sin(\alpha(t)))^2+ (x_i\sin(\alpha(t))+y_i\cos(\alpha(t)))^2\bigr)\\
&=\frac{\dot{\alpha}(t)}{\kappa(t)}\sum\limits_{i=1}^N m_i(x_i^2+y_i^2),\\
\intertext{and}
L_{zw}&= \frac{1}{\kappa(t)}\sum\limits_{i=1}^Nm_i(x_i(t)\dot{y}_i(t)-\dot{x}_i(t)y_i(t))\\
&=\frac{\dot{\beta}(t)}{\kappa(t)}\sum\limits_{i=1}^N m_i\bigl((z_i\cos(\beta(t))-w_i\sin(\beta(t)))^2+(z_i\sin(\beta(t))+w_i\cos(\beta(t)))^2\bigr)\\
&=\frac{\dot{\beta}(t)}{\kappa(t)}\sum\limits_{i=1}^Nm_i(z_i^2+w_i^2).\end{align*}
For $L_{xy}$ and $L_{zw}$ to be constant, we must have one of the following three conditions satisfied:
\begin{enumerate}
\item $x_i=y_i=0$ for all $i=1,\ldots, N$ and $\beta(t)=cK(t)$ for some $c \in \R$;
\item $z_i=w_i=0$ for all $i=1,\ldots, N$ and $\alpha(t)=cK(t)$ for some $c \in \R$;
\item $\alpha(t)=aK(t)$ for some $a \in \R$ and $\beta(t)=bK(t)$ for some $b \in \R$.
\end{enumerate}
The first and second cases are proved the same way, so we will look at the first and third cases only.

In the first case, since $\overline{\q}_i(t)$ is a solution to the $N$-body problem with uniformly varying positive curvature, the following equation is satisfied for $i=1,\ldots, N$:
\begin{equation*}
m_ic\dot{\kappa}\begin{bmatrix}0\\ 0 \\ -w_i(t)\\ z_i(t)\end{bmatrix}-m_ic^2\kappa^2\overline{\q}_i(t)=\kappa^{3/2}(t)\nabla_{\overline{\q}_i} U - m_ic^2\kappa^2\overline{\q}_i(t) + m_ic\dot{\kappa}\begin{bmatrix}0 \\ 0 \\ -w_i(t) \\ z_i(t)\end{bmatrix}.\end{equation*}
Then $\nabla_{\overline{\q}_i} U=0$, so $\overline{\q}_i$ is a special central configuration, and 
\begin{align*}
\overline{\q}_i(t)&=\begin{bmatrix}0 \\ 0 \\  z_i\cos(cK)-w_i\sin(cK)\\ z_i\sin(cK)+w_i\cos(cK)\end{bmatrix}= e^{\xi(t)}\overline{\q}_i, 
\end{align*}
where $\xi(t)$ is as in (\ref{xi}).

In the third case, if we notice that $\frac{\dot{\kappa}(t)}{\kappa(t)}\dot{\xi}(t)=\ddot{\xi}(t)$, we know that the following equations are satisfied by $\overline{\q}_i(t)$:
\begin{align*}
\kappa^{3/2}\nabla_{\overline{\q}_i} U&=m_i\kappa^2(t)\begin{bmatrix}(a^2(x_i^2+y_i^2)+b^2(z_i^2+w_i^2)-a^2)x_i\\ (a^2(x_i^2+y_i^2)+b^2(z_i^2+w_i^2)-a^2)y_i\\ (a^2(x_i^2+y_i^2)+b^2(z_i^2+w_i^2)-b^2)z_i\\ (a^2(x_i^2+y_i^2)+b^2(z_i^2+w_i^2)-b^2)w_i\end{bmatrix}\\
&= m_i\kappa^2(t)(b^2-a^2)\begin{bmatrix}(z_i^2+w_i^2)x_i\\ (z_i^2+w_i^2)y_i\\ -(x_i^2+y_i^2)z_i\\ -(x_i^2+y_i^2)w_i\end{bmatrix}.\end{align*}
Assuming that $\kappa$ is not constant, this equation can only hold if $\overline{\q}_i(t)$ satisfies condition (1) or (2), or if $a=\pm b$ and $\overline{\q}_i$ is a special configuration. In either case, the hypothesis holds, and \begin{equation*}
\overline{\q}_i(t)=\begin{bmatrix}x_i\cos(aK)-y_i\sin(aK)\\ x_i\sin(aK)+y_i\cos(aK)\\ z_i\cos(aK)-w_i\sin(\pm aK)\\ z_i\sin(\pm aK)+w_i\cos(aK)\end{bmatrix}=e^{\xi(t)}\overline{\q}_i.\end{equation*}
This remark completes the proof.
\end{proof}
 
\subsection{Homographic orbits in $\H^3$}
In $\H^3$, a solution in the form  $B^{-1}e^{\xi_i(t)}B$,  where $i=1,2$, is called a homographic orbits, where $B$ is a constant matrix in  $SO(3,1)$, and 
$$\xi_1(t)=\begin{bmatrix}0 & 0 & 0 & 0\\ 0 & 0 & -\eta(t) & \eta(t)\\ 0 & \eta(t) & 0 & 0 \\ 0 & \eta(t) & 0 & 0\end{bmatrix}, \xi_2(t)=\begin{bmatrix}0 & -\alpha(t) & 0 & 0\\ \alpha(t) & 0 & 0 & 0\\ 0 & 0 & 0 & -\beta(t)\\ 0 & 0 & \beta(t) & 0\end{bmatrix},$$
$\alpha(t), \beta(t), \eta(t) \in C^1(\R),\ \alpha(0)= \beta(0)=\eta(0)=0.$ 
Since the equations (\ref{fixed}) with $\kappa<0$ are invariant under the $SO(3,1)$-action, it is sufficient to consider the case $B=id_{SO(3,1)}$.

As all homographic solutions in the case $\kappa>0$ correspond to fixed-point solutions, or special central configurations in $\S^3$, and there are no fixed-point solutions in $\H^3$ \cite{Diacu3, DiacuZhu}, we would expect that the case $\kappa<0$ does not have homographic solutions. We will further show that this is indeed the case.
\begin{proposition}
There are no homographic solutions of the $N$-body problem with negative uniformly varying curvature.
\end{proposition}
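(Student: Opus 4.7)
The plan is to parallel the proof of the analogous proposition for $\S^3$: show that any homographic motion in $\H^3_\kappa$ must, after exploiting conservation of angular momentum and the equations (\ref{fixed}), force the configuration $\overline{\q}$ to be a special central configuration in the unit $\H^3$, and then invoke the cited fact from \cite{Diacu3, DiacuZhu} that no such configurations exist in $\H^3$. Since the equations with $\kappa<0$ are $SO(3,1)$-invariant, I take $B=\mathrm{id}$ and treat the two families $e^{\xi_1(t)}$ and $e^{\xi_2(t)}$ separately.

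For the $\xi_2$ family the argument proceeds formally as in the $\S^3$ case. Writing $\overline{\q}_i(t)=e^{\xi_2(t)}\overline{\q}_i$ in coordinates, I compute the angular momentum integrals associated with the $xy$- and $zw$-planes. Conservation produces expressions of the form $(\dot\alpha(t)/\kappa(t))\sum m_i(x_i^2+y_i^2)$ and an analogue in $\dot\beta,z_i,w_i$ that must be independent of $t$. Modulo the degenerate cases where every body lies in $\{x=y=0\}$ or $\{z=w=0\}$, this forces $\alpha(t)=aK(t)$, $\beta(t)=bK(t)$ with $K(t)=\int_0^t\kappa(\tau)d\tau$. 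The key identity $\dot\kappa\dot\xi/\kappa=\ddot\xi$, valid whenever $\xi(t)$ is a constant Lie-algebra element times $K(t)$, then cancels the acceleration and the friction-like term in (\ref{fixed}), and the remaining equation splits into a Lorentzian centripetal balance together with $\nabla_{\overline{\q}_i}\overline{U}=0$. The latter says $\overline{\q}$ is a special central configuration in $\H^3$, which is impossible.

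The parabolic family $\xi_1$ has no analog in the $\S^3$ setting and must be handled independently. The generator $M$ with $\xi_1(t)=\eta(t)M$ is nilpotent (one checks $M^3=0$), so $e^{\xi_1(t)}=I+\eta(t)M+\tfrac{1}{2}\eta^2(t)M^2$ and $\overline{\q}_i(t)$ is a polynomial in $\eta(t)$. Imposing conservation of the angular momentum integrals that see the parabolic direction again forces $\eta(t)=cK(t)$ for some real $c$ (the only alternative being all $\overline{\q}_i\in\ker M$, a genuinely degenerate configuration). Substituting into (\ref{fixed}) and again invoking $\dot\kappa\dot\xi/\kappa=\ddot\xi$ reduces the system to $\nabla_{\overline{\q}_i}\overline{U}=0$ plus a balance term whose Lorentzian signature is consistent only in trivial cases, again yielding the same contradiction.

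I expect the parabolic case to be the main obstacle. Its orbits are non-compact, the generator is nilpotent rather than semisimple, and several genuinely new sub-configurations (bodies lying in $\ker M$ or in $\ker M^2$) must be examined individually to confirm that none supports a nontrivial homographic motion without forcing a special central configuration. A secondary technical issue is keeping the Lorentzian signs straight when substituting into (\ref{fixed}), since the flip $\sigma=-1$ reverses the sign of the centripetal term relative to the $\S^3$ calculation; fortunately this works in our favor, as it rules out several sub-cases by a direct signature mismatch before one even needs to invoke the nonexistence of special central configurations.
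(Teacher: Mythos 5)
Your overall architecture is the paper's: the same split into the parabolic family $\xi_1$ and the elliptic--hyperbolic family $\xi_2$, the same use of the angular momentum integrals to force the angle functions to be multiples of $K(t)$, the same identity $\ddot\xi=(\dot\kappa/\kappa)\dot\xi$, and the same final appeal to the nonexistence of special central configurations in $\H^3$. The one genuine divergence is the parabolic case, and there the paper is shorter and needs fewer hypotheses: writing $L_{yz}=\frac{\dot\eta}{\kappa}\sum_i m_i\bigl(y_i^2+z_i^2-z_iw_i+\eta y_i(w_i-z_i)+\frac{\eta^2}{2}(w_i-z_i)^2\bigr)$, it observes that along the branch $\eta=cK$, $c\neq 0$, constancy of $L_{yz}$ forces the leading coefficient $\sum_i m_i(w_i-z_i)^2$ to vanish, hence $w_i=z_i$ for all $i$, contradicting $\overline{\q}_i\cdot\overline{\q}_i=-1$. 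So conservation does not ``force $\eta=cK$'' and then hand you off to the dynamics, as you assert; it outright kills that branch, with no equations of motion needed. Your alternative---substitute $\eta=cK$ into (\ref{fixed}), use $M^3=0$ so that $\ddot{\overline{\q}}_i-(\dot\kappa/\kappa)\dot{\overline{\q}}_i=\dot\eta^2M^2\overline{\q}_i$ and $\dot{\overline{\q}}_i\cdot\dot{\overline{\q}}_i=\dot\eta^2(w_i-z_i)^2$, then conclude $\nabla_{\overline{\q}_i}\overline{U}=0$---does check out, but only via the $\kappa$-scaling argument discussed below, so it is longer and strictly requires $\kappa$ non-constant, which the paper's constraint-based argument does not. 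Also sharpen your $\ker M$ alternative: it is not merely ``genuinely degenerate'' but empty, since $y_i=0$, $z_i=w_i$ gives $\overline{\q}_i\cdot\overline{\q}_i=x_i^2\geq 0\neq -1$; this is exactly the paper's second subcase, and the contradiction must be stated, not just labeled.

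Two points are left open in your $\xi_2$ case. First, of your two ``degenerate'' branches, $\{z=w=0\}$ is vacuous in $\H^3$ because $z_i^2-w_i^2=-(1+x_i^2+y_i^2)<0$ (which is also why $\beta=bK$ is forced unconditionally, with no case split at all---the paper notes this), but $\{x_i=y_i=0 \text{ for all } i\}$ is a genuine configuration class and you never return to it; the paper closes it by substituting into (\ref{fixed}) and reading off $\nabla_{\overline{\q}_i}\overline{U}=0$ directly. Second, and more substantively, your claim that after the cancellation the remaining equation ``splits into a Lorentzian centripetal balance together with $\nabla_{\overline{\q}_i}\overline{U}=0$'' is not automatic: what one actually obtains is the single identity $|\kappa|^{3/2}\nabla_{\overline{\q}_i}\overline{U}=m_i(a^2+b^2)\kappa^2\,\v_i$ for a constant vector $\v_i$ (note the coefficient $a^2+b^2$, not $b^2-a^2$ as on $\S^3$, so no choice $a=\pm b$ makes it vanish), and the split into two separately vanishing sides holds only because $|\kappa|^{3/2}$ and $\kappa^2$ are linearly independent functions of $t$ when $\kappa$ is non-constant. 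That hypothesis is essential, not cosmetic: for constant $\kappa<0$ one-parameter motions of exactly this $\xi_2$ form do exist (hyperbolic relative equilibria), so a version of the argument that omits the non-constancy step would prove something false. The paper invokes it explicitly (``assuming that $\kappa$ is not constant''); your write-up should too, both here and in your parabolic-case substitution.
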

\begin{proof}
If a solution is  homographic, then it has the form 
$$
\overline{\q}_i(t)=e^{\xi_j(t)}\overline{\q}_i, \ j=1,2.
$$ We will rule out the two possible cases separately.
\smallskip

Case 1: $\xi=\xi_1$. In this case, solutions will take the form 
$$\overline{\q}_i(t)=\begin{bmatrix} x_i \\ y_i-z_i\eta(t)+w_i\eta(t)\\ z_i+y_i\eta(t)-z_i\eta^2(t)/2+w_i\eta^2(t)/2\\ w_i + y_i\eta(t)-z_i\eta^2(t)/2+w_i\eta^2(t)/2\end{bmatrix},$$
 where $\eta$ is a differentiable function, and $\overline{\q}_i=\begin{bmatrix}x_i\\y_i\\z_i\\w_i\end{bmatrix}$ is the initial position for the $i^{th}$ particle. If we look at the angular momentum integrals in the $xy, yz$ directions, we find after some simple calculations that \begin{align}
L_{xy}&=\frac{\dot{\eta}(t)}{\kappa(t)}\sum\limits_{i=1}^Nm_ix_i(w_i-z_i)\label{xy},\\
L_{yz}&=\frac{\dot{\eta}(t)}{\kappa(t)}\sum\limits_{i=1}^Nm_i(y_i^2+z_i^2-z_iw_i+\eta(t)y_i(w_i-z_i)+\frac{\eta^2(t)}{2}(w_i-z_i)^2).\label{yz}
\end{align}
Note that $\eta(t)$ is not constant. Otherwise, we get $\eta(t)=\eta(0)=0$, and we get a fixed-point solution in $\H^3$, which does not exist in $\H^3$\cite{Diacu3, DiacuZhu}. Thus either $\dot{\eta}(t)=c\kappa(t)$ for some $c \neq 0$, or \begin{align*}&\eta^2(t)\sum\limits_{i=1}^N\frac{m_i(w_i-z_i)^2}{2}+\eta(t)\sum\limits_{i=1}^N m_iy_i(w_i-z_i)+\sum\limits_{i=1}^Nm_i(y_i^2+z_i^2-z_iw_i)=0\\
\intertext{and}
&\sum\limits_{i=1}^Nm_ix_i(w_i-z_i)=0\end{align*}
for all $t \in [0, \infty)$. In the first case, in order for $L_{yz}$ to be constant, it would be necessary that $\sum_{i=1}^N\frac{m_i(w_i-z_i)^2}{2}=0$, so $w_i=z_i$ for all $i=1,2,\ldots N$. But if this is the case, then $x_i^2+y_i^2+z_i^2-w_i^2=x_i^2+y_i^2=-1$, which is impossible. In the second case, each of $\sum_{i=1}^N m_ix_i(w_i-z_i)$, $\sum_{i=1}^N\frac{m_i(w_i-z_i)^2}{2}$, $\sum_{i=1}^N m_iy_i(w_i-z_i)$, and $\sum_{i=1}^N m_i(y_i^2+z_i^2-z_iw_i)$ must be equal to zero. This is possible only if $y_i=0, z_i=w_i$ for $i=1,\ldots, N$. But then $x_i^2+y_i^2+z_i^2-w_i^2=x_i^2=-1$, which is impossible. Therefore there are no homographic orbits for $\xi_1$.
\smallskip

Case 2:  $\xi=\xi_2$. In this case, solutions will take the form $$\overline{\q}_i(t)=\begin{bmatrix}x_i\cos(\alpha(t)-y_i\sin(\alpha(t) \\ x_i \sin(\alpha(t))+y_i\cos(\alpha(t))\\ z_i\cosh(\beta(t))+w_i\sinh(\beta(t))\\ z_i\sinh(\beta(t))+w_i\cosh(\beta(t))\end{bmatrix},$$ where $\alpha, \beta$ are real-valued differentiable functions such that $\alpha(0)=\beta(0)=0$, and $\overline{\q}_i=\begin{bmatrix}x_i\\y_i\\z_i\\w_i\end{bmatrix}$ is the initial position for the $i^{th}$ particle. Notice that \begin{align*}
\dot{x}_i(t)&=-\dot{\alpha}(t)y_i(t),\\
\dot{y}_i(t)&=\dot{\alpha}(t)x_i(t),\\
\dot{z}_i(t)&=\dot{\beta}(t)w_i(t),\\
\dot{w}_i(t)&=\dot{\beta}(t)z_i(t).
\end{align*}
If we look at the angular momentum integrals in the $xy$ and $zw$ directions, we find that 
$$
L_{xy}=\frac{1}{\kappa(t)}\sum\limits_{i=1}^N m_i(x_i(t)\dot{y}_i(t)-\dot{x}_i(t)y_i(t))
$$
$$
=\frac{\dot{\alpha}(t)}{\kappa(t)}\sum\limits_{i=1}^Nm_i \bigl((x_i\cos(\alpha(t))-y_i\sin(\alpha(t)))^2+ (x_i\sin(\alpha(t))+y_i\cos(\alpha(t)))^2\bigr)
$$
$$
=\frac{\dot{\alpha}(t)}{\kappa(t)}\sum\limits_{i=1}^N m_i(x_i^2+y_i^2),
$$
$$
L_{zw}= \frac{1}{\kappa(t)}\sum\limits_{i=1}^N m_i(z_i(t)\dot{w}_i(t)-\dot{z}_i(t)w_i(t))
$$
$$
= \frac{\dot{\beta}(t)}{\kappa(t)}\sum\limits_{i=1}^N m_i((z_i\cosh(\beta(t))+w_i\sinh(\beta(t)))^2-(z_i\sinh(\beta(t))+w_i\cosh(\beta(t)))^2)
$$
$$
= \frac{\dot{\beta}(t)}{\kappa(t)}\sum\limits_{i=1}^N m_i(z_i^2-w_i^2).
$$
Since $z_i^2-w_i^2$ is always negative, $L_{zw}$ is constant only if $\beta(t)=bK(t)$ for some $b \in \R$. $L_{xy}$ is constant if either $\alpha(t)=aK(t)$ for some $a \in \R$ or $x_i=y_i=0$ for all $i=1,\ldots, N$. If $x_i=y_i=0$ for all $i=1,\ldots, N$, then the system satisfies the equation \begin{equation*}
m_ib\dot{\kappa}\begin{bmatrix}0\\0\\w_i\\z_i\end{bmatrix}+m_ib^2\kappa^2\overline{\q}_i = \kappa^{3/2}\nabla_{\overline{\q}_i} U + m_ib^2\kappa^2\overline{\q}_i + m_ib\dot{\kappa}\begin{bmatrix}0\\0\\w_i\\z_i\end{bmatrix}.
\end{equation*}
Consequently $\nabla_{\overline{\q}_i} U=0$, which is impossible for $\kappa <0$\cite{Diacu3, DiacuZhu}. If $\alpha(t)=aK(t)$, we notice that $\frac{\dot{\kappa}(t)}{\kappa(t)}\dot{\xi}(t)=\ddot{\xi}(t)$, so $\overline{\q}_i(t)$ satisfies the following equations:
\begin{align*}
\kappa^{3/2}(t) \nabla_{\overline{\q}_i} U&= m_i\kappa^2(t) \begin{bmatrix} (b^2(z_i^2-w_i^2)-a^2(x_i^2+y_i^2)-a^2)x_i\\ (b^2(z_i^2-w_i^2)-a^2(x_i^2+y_i^2)-a^2)y_i\\ (b^2+b^2(z_i^2-w_i^2)-a^2(x_i^2+y_i^2))z_i\\ (b^2+b^2(z_i^2-w_i^2)-a^2(x_i^2+y_i^2))w_i\end{bmatrix}\\
&= m_i(a^2+b^2)\kappa^2(t)\begin{bmatrix}(z_i^2-w_i^2)x_i\\ (z_i^2-w_i^2)y_i\\ -(x_i^2+y_i^2)z_i\\ -(x_i^2+y_i^2)w_i\end{bmatrix}.\end{align*}
Assuming that $\kappa$ is not constant, this equation can only hold if $\nabla_{\overline{\q}_i} U=0$, which is impossible for $\kappa<0$. Therefore there are no homographic orbits for $\xi_2$.
\end{proof}

\section{Special Central Configurations}

Since we have shown in the previous section that there is a strong link between homographic orbits and special central configurations in $\S^3$, we will now look at several examples of special central configurations and provide a rough classification of all 4-body special central configurations. In this section we will make use of several results about special central configurations that have been proved in \cite{DiacuZhu}:
\begin{enumerate}
\item No special central configuration in $\S^3$ has all masses lying in any closed hemisphere, unless all masses lie on a great sphere.
\item No special central configuration in $\S^2$ has all masses lying in any closed hemisphere, unless all masses lie on a great circle.
\item If $\q$ is a special central configuration in $\S^3$, and $g \in SO(4)$, then the configuration $g\q$, resulting from the action of $g$ on $\q$, is also a special central configuration.
\end{enumerate}
\smallskip

\subsection{Double Lagrangian special central configurations on $\S^2_{xyz}$}
Let $\S^2_{xyz}:=\{(x,y,z,w)\in \R^4| x^2+y^2+z^2=1, w=0\}.$   One of the simplest central configurations is the Lagrangian, consisting of 3 bodies of equal masses evenly spaced around a circle\cite{Diacu3, DiacuZhu}. We now look at the special central configurations consisting of two parallel Lagrangian central configurations, which we will call double Lagrangian.

\begin{proposition}In the 6-body problem on the sphere, there are infinitely many double Lagrangian special central configurations, i.e.\ configurations of the form 
\begin{align*}
&\q_1=\begin{bmatrix}r_1\\0\\c_1\\0\end{bmatrix}, \q_2=\begin{bmatrix}-\frac{r_1}{2}\\ \frac{\sqrt{3}r_1}{2}\\ c_1\\0\end{bmatrix}, \q_3=\begin{bmatrix}-\frac{r_1}{2}\\-\frac{\sqrt{3}r_1}{2} \\ c_1\\0\end{bmatrix},\\ &\q_4=\begin{bmatrix}r_2\\0\\c_2\\0\end{bmatrix}, \q_5=\begin{bmatrix}-\frac{r_2}{2}\\ \frac{\sqrt{3}r_2}{2}\\ c_2\\0\end{bmatrix}, \q_6=\begin{bmatrix}-\frac{r_2}{2}\\ -\frac{\sqrt{3}r_2}{2}\\c_2\\0\end{bmatrix},\\
&m_1=m_2=m_3=1,\\
&m_4=m_5=m_6=m,
\end{align*}
where $c_1 \in (0,1)$, $c_2 \in (-1,0)$, $r_1=\sqrt{1-c_1^2}$, $r_2=\sqrt{1-c_2^2}$ and $m \in (0,\infty)$.
\end{proposition}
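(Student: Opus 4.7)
My plan is to exploit the threefold rotational symmetry of the configuration to reduce the six vector equations $\F_i=\nabla_{\overline{\q}_i}\overline{U}=0$ to two scalar equations, eliminate the mass $m$, and then establish a one-parameter family of solutions of the remaining relation in $(c_1,c_2)$. The rotation $R_{2\pi/3}$ about the $z$-axis cyclically permutes $\overline{\q}_1\to\overline{\q}_2\to\overline{\q}_3$ and $\overline{\q}_4\to\overline{\q}_5\to\overline{\q}_6$ and fixes the masses, so $\F_2,\F_3$ (respectively $\F_5,\F_6$) are rotates of $\F_1$ (respectively $\F_4$), and it suffices to enforce $\F_1=\F_4=0$. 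The reflection $y\mapsto-y$ fixes $\overline{\q}_1,\overline{\q}_4$ and swaps $\overline{\q}_2\leftrightarrow\overline{\q}_3$, $\overline{\q}_5\leftrightarrow\overline{\q}_6$, so the $y$-components of $\F_1$ and $\F_4$ vanish; their $w$-components vanish because every $\overline{\q}_j$ has $w=0$; and tangency to $\S^2_{xyz}$ at the respective points then confines $\F_1,\F_4$ to the one-dimensional lines spanned by $\tau_1:=(c_1,0,-r_1,0)$ and $\tau_4:=(c_2,0,-r_2,0)$. Thus $\F_1=\F_4=0$ is equivalent to the two scalar conditions $\tau_1\cdot\F_1=0$ and $\tau_4\cdot\F_4=0$.

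Next, using formula (\ref{equ:F}) with $m_1=m_2=m_3=1$ and $m_4=m_5=m_6=m$, these two scalars take the polynomial-in-$m$ form
\[
\text{(I)}\ a_1(c_1)+m\,b_1(c_1,c_2)=0,\qquad
\text{(II)}\ m\,c_4(c_1,c_2)+m^2\,d_4(c_2)=0,
\]
where the coefficients are explicit combinations of $c_1,c_2,r_1,r_2$ obtained by taking the $\tau_i$-component of each contribution $(\overline{\q}_j-\csn d_{ij}\,\overline{\q}_i)/\sn^3 d_{ij}$. Since $m>0$, equation (II) reduces to $m=-c_4/d_4$ while (I) gives $m=-a_1/b_1$; elimination yields the single algebraic relation
\[
R(c_1,c_2):=a_1(c_1)\,d_4(c_2)-b_1(c_1,c_2)\,c_4(c_1,c_2)=0,
\]
subject to the positivity constraint $-a_1(c_1)/b_1(c_1,c_2)>0$.

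To finish, for each fixed $c_1\in(0,1)$ I would apply the intermediate value theorem to the continuous map $c_2\mapsto R(c_1,c_2)$, using sign changes arising in limiting regimes: at $c_2\to -1^+$, where the lower triangle collapses onto the south pole so the cross-triangle force on $\overline{\q}_1$ approximates that of a single mass $3m$ on the $-z$ axis; and at $c_2$ close to $0^-$, where the lower triangle approaches the equator and the geometry of the cross forces reverses character. Since distinct values of $c_1$ give distinct configurations, this produces uncountably many double Lagrangian special central configurations. The main obstacle will be the sign analysis: the functions $a_1,b_1,c_4,d_4$ are nontrivial trigonometric combinations whose derivatives are not manifestly monotone, so one must carefully isolate a subinterval of $(-1,0)$ on which $R(c_1,\cdot)$ changes sign \emph{and} the resulting $m$ remains positive. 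A natural simplification is to decompose each tangential force into an intra-triangle piece (depending on only one of $c_1,c_2$) and a cross-triangle piece whose monotonicity in $c_2$ can be controlled by elementary spherical-geometry estimates.
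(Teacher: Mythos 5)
Your reduction is exactly the paper's: the threefold rotation plus the reflection $y\mapsto-y$, together with tangency $\F_i\cdot\overline{\q}_i=0$, cuts the problem to the two scalar equations (\ref{q1}) and (\ref{q4}), and the paper likewise solves the second for $m$ (equation (\ref{m})). Where you diverge is the endgame, and that is where the proposal stops short of a proof: both claims that carry the analytic weight are asserted, not established. For the sign change of $c_2\mapsto R(c_1,c_2)$, your heuristic at $c_2\to-1^+$ even points at the wrong term. The cross force on $\overline{\q}_1$ does tend to that of a single mass $3m$ at the south pole, but that limit is \emph{finite} (in your notation $b_1\to-3/r_1$ and $c_4\to 0$); what actually drives $R\to-\infty$ is the intra-triangle, near-triple-collision term $d_4=\frac{3r_2^2c_2}{(1-(c_2^2-r_2^2/2)^2)^{3/2}}\sim\frac{c_2}{\sqrt{3}\,r_2}\to-\infty$, multiplied by $a_1>0$. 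At the other end, $d_4\to 0^-$ while $b_1(c_1,0)=r_1c_1\bigl(((3+c_1^2)/4)^{-3/2}-c_1^{-3}\bigr)<0$ and $c_4(c_1,0)=c_1^{-2}+2c_1\bigl((3+c_1^2)/4\bigr)^{-3/2}>0$, so $R\to-b_1c_4>0$. These computations do make your per-$c_1$ intermediate value argument work for every $c_1\in(0,1)$, but none of them appear in your write-up, and they are the entire content of the existence claim.

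Positivity of $m$ is a second genuine gap, not a footnote. At a root of $R$ one only knows $b_1c_4=a_1d_4<0$, so $b_1$ and $c_4$ have opposite signs and the root could a priori give $m<0$; worse, your root can land where no easy bound applies: for $c_1=\frac{1}{10}$, the paper's evaluation $f(\frac{1}{10},-\frac{1}{10})<0$ together with $f=R/d_4$, $d_4<0$, gives $R(\frac{1}{10},-\frac{1}{10})>0$, so the sign change guaranteed by your asymptotics produces a root with $c_2<-c_1$, outside the region $B=\{c_1\ge-c_2\}$. Two repairs exist: take the root closest to $c_2=0$ (if $c_4<0$ there, then $c_4$ vanishes at some nearer $c_2'$, where $R=a_1d_4<0$, contradicting $R>0$ on that interval), or invoke the symmetry $(c_1,c_2,m)\mapsto(-c_2,-c_1,1/m)$ combined with positivity on $B$. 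The paper finesses both difficulties at once: it works only on $B$, where the elementary bounds $|c_1c_2+r_1r_2|<1$ and $|2c_1c_2-r_1r_2|<2$ yield $m>0$ \emph{everywhere} on $B$, and it gets infinitude not from asymptotics but from the two evaluations $f(\frac{1}{10},-\frac{1}{10})<0$, $f(\frac{9}{10},-\frac{1}{2})>0$ plus the topological remark that a set meeting every path between these points cannot be finite. So your skeleton is right, and your route is salvageable — it would even prove the stronger statement that a solution exists for every $c_1\in(0,1)$ — but as written the two decisive steps are missing.
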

\begin{proof}
To obtain a special central configuration in the 6-body problem on the sphere, we must have \begin{equation*}
\nabla_{\q_i} U=\sum\limits_{j=1, j\neq i}^6 \frac{m_im_j(\q_j-(\q_i \cdot \q_j)\q_i)}{(1-(\q_i \cdot \q_j)^2)^{3/2}}=0\end{equation*} for $i=1,2,3,4,5,6$. By symmetry arguments, it is sufficient for the equations to hold for $i=1$ and $4$. It is easy to see that the $y$ and $w$ components are 0 for all $c_1, c_2, m$. We can also see that 
$$
\q_i \cdot \nabla_{\q_i} U=\sum\limits_{j=1, j\neq i}^N m_im_j\frac{\q_j \cdot \q_i-\q_j \cdot \q_i}{(1-(\q_i \cdot \q_j)^2)^{3/2}}=0,
$$
so $\nabla_{\q_i} U$ is orthogonal to $\q_i$. Therefore the $z$ component of $\nabla_{\q_i} U$ is zero if and only if the $x$ component is zero, so it is sufficient to have the following two equations satisfied:
\begin{align}
\label{q1}0&=\frac{3r_1^2c_1}{(1-(c_1^2-\frac{r_1^2}{2})^2)^{3/2}}+\frac{m(c_2-(c_1c_2+r_1r_2)c_1)}{(1-(c_1c_2+r_2r_1)^2)^{3/2}}+\frac{m(2c_2-(2c_1c_2-r_1r_2)c_1)}{(1-(c_1c_2-\frac{r_1r_2}{2})^2)^{3/2}}\\
\label{q4}0&=\frac{c_1-(c_1c_2+r_1r_2)c_2}{(1-(c_1c_2+r_2r_1)^2)^{3/2}}+\frac{2c_1-(2c_1c_2-r_1r_2)c_2}{(1-(c_1c_2-\frac{r_1r_2}{2})^2)^{3/2}}+\frac{3mr_2^2c_2}{(1-(c_2^2-\frac{r_2^2}{2})^2)^{3/2}}.
\end{align}
By isolating $m$ in (\ref{q4}) we get \begin{equation}
\label{m}m=-\frac{(1-(c_2^2-\frac{r_2^2}{2})^2)^{3/2}}{3r_2^2c_2}\biggl(\frac{c_1-(c_1c_2+r_1r_2)c_2}{(1-(c_1c_2+r_2r_1)^2)^{3/2}}+\frac{2c_1-(2c_1c_2-r_1r_2)c_2}{(1-(c_1c_2-\frac{r_1r_2}{2})^2)^{3/2}}\biggr).
\end{equation}
If $(c_1,c_2,m)$ satisfy the requirements for a special configuration, then by symmetry so do $(-c_2,-c_1,\frac{1}{m})$, so we can find all special central configurations with $c_1 \geq -c_2$ and then obtain the equivalent special central configurations with $c_1<-c_2$. Let 
$$
B=\{(a,b) \in (0,1) \times (-1,0): a\geq -b\}.
$$ 
If we consider the function 
$$
f\colon B \to \R,
$$  
\begin{align*}
f(c_1,c_2)=&\frac{3r_1^2c_1}{(1-(c_1^2-\frac{r_1^2}{2})^2)^{3/2}}+\frac{m(c_2-(c_1c_2+r_1r_2)c_1)}{(1-(c_1c_2+r_2r_1)^2)^{3/2}}\\
&+\frac{m(2c_2-(2c_1c_2-r_1r_2)c_1)}{(1-(c_1c_2-\frac{r_1r_2}{2})^2)^{3/2}},\end{align*}
where $m$ is as in (\ref{m}), then since $B$ is path-connected, there exists a path 
$$
p:[0,1] \to B
$$
such that $p(0)=(\frac{1}{10},-\frac{1}{10})$ and $p(1)=(\frac{9}{10},-\frac{1}{2})$. The function $f$ is continuous on its domain, so $f \circ p$ is continuous on $[0,1]$, and since $f(p(0))<0$ and $f(p(1))>0$, we have by the intermediate value theorem that there exists $a \in [0,1]$ such that $f(p(a))=0$. Since this is true for any such path $p$, then if
$$
A=\{(c_1,c_2) \in B: f(c_1,c_2)=0\},$$
the set $B\setminus A$ must have $(\frac{9}{10}, -\frac{1}{2})$ in a different path component than $(\frac{1}{10}, -\frac{1}{10})$. No finite set can path-disconnect $B$, so $f(c_1,c_2)=0$ has infinitely many solutions. But $(c_1,c_2)$ is a special central configuration if $(c_1,c_2) \in A$, and $m(c_1,c_2) >0$. If $(c_1,c_2) \in B$, then $m(c_1,c_2)>0$ if and only if \begin{equation*}
\frac{c_1-(c_1c_2+r_1r_2)c_2}{(1-(c_1c_2+r_2r_1)^2)^{3/2}}+\frac{2c_1-(2c_1c_2-r_1r_2)c_2}{(1-(c_1c_2-\frac{r_1r_2}{2})^2)^{3/2}}>0,
\end{equation*}
$|c_1c_2+r_1r_2| < 1$  since $c_1c_2+r_1r_2=\cos(d_{14})$, and $|2c_1c_2-r_1r_2|< 2$ since $c_1c_2-\frac{r_1r_2}{2}=\cos(d_{15})$, so \begin{align*}
&\frac{c_1-(c_1c_2+r_1r_2)c_2}{(1-(c_1c_2+r_2r_1)^2)^{3/2}}+\frac{2c_1-(2c_1c_2-r_1r_2)c_2}{(1-(c_1c_2-\frac{r_1r_2}{2})^2)^{3/2}}\\
> &\frac{c_1+c_2}{(1-(c_1c_2+r_1r_2)^2)^{3/2}}+\frac{2c_1+2c_2}{(1-(c_1c_2-\frac{r_1r_2}{2})^2)^{3/2}}
\geq 0\end{align*}
since $c_1 \geq -c_2 $ for $(c_1,c_2) \in B$. Then $m$ is always positive in $B$, so every element $(c_1,c_2) \in A$ corresponds to the special central configuration 
\begin{align*}
&\q_1=\begin{bmatrix}\sqrt{1-c_1^2}\\0\\c_1\\0\end{bmatrix}, \q_2=\begin{bmatrix}-\frac{\sqrt{1-c_1^2}}{2}\\ \frac{\sqrt{3}\sqrt{1-c_1^2}}{2}\\ c_1\\0\end{bmatrix}, \q_3=\begin{bmatrix}-\frac{\sqrt{1-c_1^2}}{2}\\-\frac{\sqrt{3}\sqrt{1-c_1^2}}{2} \\ c_1\\0\end{bmatrix},\\
&\q_4=\begin{bmatrix}\sqrt{1-c_2^2}\\0\\c_2\\0\end{bmatrix}, \q_5=\begin{bmatrix}-\frac{\sqrt{1-c_2^2}}{2}\\ \frac{\sqrt{3}\sqrt{1-c_2^2}}{2}\\ c_2\\0\end{bmatrix}, \q_6=\begin{bmatrix}-\frac{\sqrt{1-c_2^2}}{2}\\ -\frac{\sqrt{3}\sqrt{1-c_2^2}}{2}\\c_2\\0\end{bmatrix},\\
&m_1=m_2=m_3=1,\\
&m_4=m_5=m_6=m(c_1,c_2),\\
\end{align*}
where $m(c_1,c_2)$ is as defined in (\ref{m}). This remark completes the proof.
\end{proof}

To get a visual understanding of the roots of $f$, we substitute (\ref{m}) into (\ref{q1}) and implicitly plot the solutions of the resulting equation. We can then see the set of solutions to $f(c_1,c_2)=0$, 
where the curves are solutions, and the  shaded region is formed by the $(c_1, c_2)$ values for which $m(c_1,c_2)\leq 0$. Since no solution occurs in the  shaded region, all these solutions represent special central configurations. As we showed above, the right branch of the solution set is a path-disconnecting subset of $B$, the solutions are symmetric about $c_1=-c_2$, and $m$ is positive on $B$.

\begin{figure}[htbp] 
   \centering
   \includegraphics[width=2in]{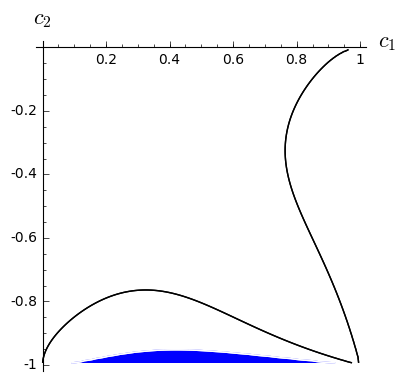}
    \caption{}
   \label{Fig2}
\end{figure}

\smallskip

\subsection{Double tetrahedron special central configurations in $\S^3$}
We now extend the previous case from two triangles in $\S^2_{xyz}$ to two tetrahedra in $\S^3$. We will call such a solution of the 8-body problem of the sphere a
double tetrahedron special central configuration.

\begin{proposition}In the 8-body problem in $\S^3$, there exist infinitely many double tetrahedron special central configurations, i.e.\ configurations of the form 
\begin{align*}
&\q_1=\begin{bmatrix}r_1 \\ 0 \\ 0 \\ c_1\end{bmatrix}, \q_2=\begin{bmatrix}-\frac{r_1}{3} \\ \frac{2\sqrt{2}r_1}{3} \\ 0 \\ c_1\end{bmatrix}, \q_3=\begin{bmatrix}-\frac{r_1}{3}\\-\frac{\sqrt{2}r_1}{3}\\ \frac{\sqrt{6}r_1}{3}\\c_1\end{bmatrix}, \q_4=\begin{bmatrix} -\frac{r_1}{3}\\-\frac{\sqrt{2}r_1}{3}\\-\frac{\sqrt{6}r_1}{3}\\c_1\end{bmatrix},\\
&\q_5=\begin{bmatrix}r_2 \\ 0 \\ 0 \\ c_2\end{bmatrix}, \q_6=\begin{bmatrix}-\frac{r_2}{3} \\ \frac{2\sqrt{2}r_2}{3} \\ 0 \\ c_2\end{bmatrix}, \q_7=\begin{bmatrix}-\frac{r_2}{3}\\-\frac{\sqrt{2}r_2}{3}\\ \frac{\sqrt{6}r_2}{3}\\c_2\end{bmatrix}, \q_8=\begin{bmatrix} -\frac{r_2}{3}\\-\frac{\sqrt{2}r_2}{3}\\-\frac{\sqrt{6}r_2}{3}\\c_2\end{bmatrix},\\
&m_1=m_2=m_3=m_4=1,\\
&m_5=m_6=m_7=m_8=m,\end{align*}
where $c_1 \in (0,1), c_2 \in (-1,0), m \in (0,\infty), r_1=\sqrt{1-c_1^2}$, and $r_2=\sqrt{1-c_2^2}$.
\end{proposition}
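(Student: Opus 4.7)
The plan is to follow the blueprint of Proposition 4, adapting it from two parallel Lagrangian triangles in $\mathbb{S}^2_{xyz}$ to two parallel regular tetrahedra in $\mathbb{S}^3$. By the $C_3$-symmetry (respectively $C_3$-symmetry about the $w$-$x$ plane within each tetrahedron, combined with the cyclic symmetry permuting the three non-distinguished vertices), it suffices to verify $\nabla_{\overline{\q}_i}\overline{U}=0$ for $i=1$ and $i=5$. Moreover, by the rotational symmetry fixing the $(x,w)$-plane, the $y$- and $z$-components of these gradients vanish automatically. Since $\overline{\q}_i\cdot\nabla_{\overline{\q}_i}\overline{U}=0$, the $x$- and $w$-components are linearly dependent (as $c_1,c_2\ne 0$), so the full system reduces to two scalar equations—say, setting the $x$-component of $\nabla_{\overline{\q}_1}\overline{U}$ and of $\nabla_{\overline{\q}_5}\overline{U}$ to zero.

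Next I would compute these two equations explicitly, using that within a single tetrahedron all pairwise inner products equal $a_k:=c_k^2-r_k^2/3$ $(k=1,2)$, and that the cross inner products take only two values $a_{15}:=c_1c_2+r_1r_2$ and $a_{16}:=c_1c_2-r_1r_2/3$. Summing the three in-tetrahedron contributions gives a force along the $(x,w)$-direction only, and the same holds for the three far-vertex contributions from the opposite tetrahedron. This yields two equations of the form analogous to (\ref{q1}) and (\ref{q4}), with $m$ appearing linearly in both. I would then isolate $m$ from the $\overline{\q}_5$ equation to obtain an expression $m=m(c_1,c_2)$, and substitute it into the $\overline{\q}_1$ equation to define a continuous function
\[
f\colon B\to\mathbb{R},\qquad B=\{(a,b)\in(0,1)\times(-1,0):a\geq -b\}.
\]

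To produce infinitely many solutions, I would apply the same path-connectedness argument as in Proposition 4: exhibit two points in $B$ at which $f$ has opposite signs (e.g.\ $(c_1,c_2)$ close to the diagonal $c_1=-c_2$ versus $(c_1,c_2)$ with $c_1$ near $1$ and $c_2$ bounded away from $0$), so that every continuous path between them yields, by the intermediate value theorem, a zero of $f$; since no finite set can disconnect $B$, the zero set of $f$ is infinite. The configurations symmetric about $c_1=-c_2$ are obtained by the $(c_1,c_2,m)\mapsto(-c_2,-c_1,1/m)$ involution.

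The main obstacles I expect are two: first, verifying positivity of $m(c_1,c_2)$ throughout $B$, which should follow from the chain of inequalities $|a_{15}|<1$, $|a_{16}|<1$, and $c_1\geq -c_2$, bounding the bracketed expression by a manifestly positive quantity just as in the proof of Proposition 4; and second, checking the numerical sign change of $f$ at two explicit points of $B$—this is routine but requires care because the in-tetrahedron self-force term $-4c_1^2 r_1/(1-a_1^2)^{3/2}$ has the opposite sign to the corresponding triangle term, so the cancellation balance is slightly different than in the $\mathbb{S}^2_{xyz}$ case. Once these two points are verified, the rest of the argument is identical to the double-Lagrangian proof.
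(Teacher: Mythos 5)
Your proposal is correct and follows essentially the same route as the paper's proof: the same symmetry reduction to vertices $\q_1$ and $\q_5$, the same use of $\q_i\cdot\nabla_{\q_i}U=0$ to collapse to one scalar equation per tetrahedron, the same isolation of $m=m(c_1,c_2)$, the same intermediate-value/path-disconnection argument on $B=\{(a,b)\in(0,1)\times(-1,0):a\geq-b\}$ (the paper checks signs at $(\tfrac{1}{10},-\tfrac{1}{10})$ and $(\tfrac{9}{10},-\tfrac{1}{2})$), the same involution $(c_1,c_2,m)\mapsto(-c_2,-c_1,1/m)$, and the same positivity bound for $m$ via $|c_1c_2+r_1r_2|<1$, $|3c_1c_2-r_1r_2|<3$, and $c_1\geq-c_2$. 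One immaterial slip: the in-tetrahedron term does \emph{not} have opposite sign to the triangle case---the $x$-components are $-4c_1^2r_1$ versus $-3c_1^2r_1$ (equivalently the $w$-components are $+4r_1^2c_1$ versus the $z$-component $+3r_1^2c_1$), so the sign structure is the same and the cancellation balance is analogous.
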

\begin{proof}
In order to have a special central configuration in the 8-body problem on the sphere, we must have \begin{equation}\label{sccreq}
\nabla_{\q_i} U=\sum\limits_{j=1, j\neq i}^8 \frac{m_im_j(\q_j-(\q_i \cdot \q_j)\q_i)}{(1-(\q_i \cdot \q_j)^2)^{3/2}}=0\end{equation} for $i=1,2,3,4,5,6, 7, 8$. Let 
\begin{equation*}
g=\begin{bmatrix}-\frac{1}{3} & -\frac{\sqrt{2}}{3} & \frac{\sqrt{6}}{3} & 0\\ \frac{2\sqrt{2}}{3} & -\frac{1}{6} & \frac{\sqrt{3}}{6} & 0\\ 0 & \frac{\sqrt{3}}{2} & \frac{1}{2} & 0\\ 0 & 0 & 0 & 1\end{bmatrix} \in \text{SO}(4),\ \ h=\begin{bmatrix}1 & 0 & 0 & 0\\ 0 & -\frac{1}{2} & -\frac{\sqrt{3}}{2} & 0\\ 0 & \frac{\sqrt{3}}{2} & -\frac{1}{2} & 0\\ 0 & 0 & 0 & 1\end{bmatrix} \in \text{SO}(4).\end{equation*}
The action of $\langle g, h \rangle$ on $\q$ is the permutation group $$\langle (\q_1, \q_2, \q_3)(\q_5, \q_6, \q_7), (\q_2, \q_3, \q_4)(\q_6, \q_7, \q_8)\rangle,$$ so by the symmetries of $\langle g, h \rangle$ it is sufficient for (\ref{sccreq}) to hold for $i=1,5$. For these two vertices, (\ref{sccreq}) becomes \begin{align*}
\nabla_{\q_1}U=&\frac{\q_2+\q_3+\q_4-3(c_1^2-\frac{r_1^2}{3})\q_1}{(1-(c_1^2-\frac{r_1^2}{3})^2)^{3/2}}+m\frac{\q_5-(c_1c_2+r_1r_2)\q_1}{(1-(c_1c_2+r_1r_2)^2)^{3/2}},\\
&+m\frac{\q_6+\q_7+\q_8-3(c_1c_2-\frac{r_1r_2}{3})\q_1}{(1-(c_1c_2-\frac{r_1r_2}{3})^2)^{3/2}}\\
\nabla_{\q_5}U=&m\frac{\q_2+\q_3+\q_4-3(c_1c_2-\frac{r_1r_2}{3})\q_5}{(1-(c_1c_2-\frac{r_1r_2}{3})^2)^{3/2}}+m\frac{\q_1-(c_1c_2+r_1r_2)\q_5}{(1-(c_1c_2+r_1r_2)^2)^{3/2}}\\
&+m^2\frac{\q_6+\q_7+\q_8-3(c_2^2-\frac{r_2^2}{3})\q_5}{(1-(c_2^2-\frac{r_2^2}{3})^2)^{3/2}}.
\end{align*}
Then since 
$$
\q_2+\q_3+\q_4=\begin{bmatrix}-r_1 \\ 0 \\ 0 \\ 3c_1\end{bmatrix}\ \text{and }\ \q_6+\q_7+\q_8=\begin{bmatrix}-r_2 \\ 0 \\ 0 \\ 3c_2\end{bmatrix},
$$ 
we can see that the $y$ and $z$ coordinates of $\nabla_{\q_1}U,\nabla_{\q_5}U$ are identically 0.
As we showed in the previous theorem, $\q_i \cdot \nabla_{\q_i}U=0$, so the $w$ component of $\nabla_{\q_i}U$ is 0 if and only if the $x$ component is 0 for $i=1,5$. Then the system must satisfy the equations:
\begin{align}
\label{q1tet}0&=\frac{4r_1^2c_1}{(1-(c_1^2-\frac{r_1^2}{3})^2)^{3/2}}+\frac{m(c_2-(c_1c_2+r_1r_2)c_1)}{(1-(c_1c_2+r_2r_1)^2)^{3/2}}+\frac{m(3c_2-(3c_1c_2-r_1r_2)c_1)}{(1-(c_1c_2-\frac{r_1r_2}{3})^2)^{3/2}}\\
\label{q5tet}0&=\frac{c_1-(c_1c_2+r_1r_2)c_2}{(1-(c_1c_2+r_2r_1)^2)^{3/2}}+\frac{3c_1-(3c_1c_2-r_1r_2)c_2}{(1-(c_1c_2-\frac{r_1r_2}{3})^2)^{3/2}}+\frac{4mr_2^2c_2}{(1-(c_2^2-\frac{r_2^2}{4})^2)^{3/2}}.\end{align}
By isolating $m$ in (\ref{q5tet}), we obtain\begin{equation}
\label{mtet}m=-\frac{(1-(c_2^2-\frac{r_2^2}{3})^2)^{3/2}}{4r_2^2c_2}\biggl(\frac{c_1-(c_1c_2+r_1r_2)c_2}{(1-(c_1c_2+r_2r_1)^2)^{3/2}}+\frac{3c_1-(3c_1c_2-r_1r_2)c_2}{(1-(c_1c_2-\frac{r_1r_2}{3})^2)^{3/2}}\biggr).
\end{equation}
If the elements $(c_1,c_2,m)$ satisfy the requirements for a special configuration, then by the symmetry under $\begin{bmatrix}1 & 0 & 0 & 0\\ 0 & 1 & 0 & 0\\ 0 & 0 & -1 & 0\\ 0 & 0 & 0 & -1\end{bmatrix}$, so do the elements $(-c_2,-c_1,\frac{1}{m})$, therefore we can find all special central configurations with $c_1 \geq -c_2$, and then obtain the equivalent special central configurations with $c_1<-c_2$. Consider the set 
$$
B=\{(a,b) \in (0,1) \times (-1,0): a\geq -b\}.
$$ 
If we take the function 
$$
g\colon B \to \R,
$$ 
\begin{align*}
g(c_1,c_2)=&\frac{4r_1^2c_1}{(1-(c_1^2-\frac{r_1^2}{3})^2)^{3/2}}+\frac{m(c_2-(c_1c_2+r_1r_2)c_1)}{(1-(c_1c_2+r_2r_1)^2)^{3/2}}\\
&+\frac{m(3c_2-(3c_1c_2-r_1r_2)c_1)}{(1-(c_1c_2-\frac{r_1r_2}{3})^2)^{3/2}},\end{align*}
where $m$ is as in (\ref{mtet}), then since $B$ is path-connected, there exists a path 
$$
p\colon[0,1] \to B
$$ 
such that $p(0)=(\frac{1}{10},-\frac{1}{10})$ and $p(1)=(\frac{9}{10},-\frac{1}{2})$. The function $g$ is continuous on its domain, so $g \circ p$ is a continuous function on $[0,1]$, and since $g(p(0))<0$ and $g(p(1))>0$, we have by the intermediate value theorem that there exists $a \in [0,1]$ such that $g(p(a))=0$. Since this is true for any such path $p$, then if 
$$
C=\{(c_1,c_2) \in B: g(c_1,c_2)=0\},
$$
the set $B\setminus C$ must have $(\frac{9}{10}, -\frac{1}{2})$ in a different path component than $(\frac{1}{10}, -\frac{1}{10})$. No finite set can path-disconnect $B$, so $g(c_1,c_2)=0$ has infinitely many solutions. But $(c_1,c_2)$ is a special central configuration if $(c_1,c_2) \in C$, and $m(c_1,c_2) >0$. If $(c_1,c_2) \in B$, so $m(c_1,c_2)>0$ if and only if \begin{equation*}
\frac{c_1-(c_1c_2+r_1r_2)c_2}{(1-(c_1c_2+r_2r_1)^2)^{3/2}}+\frac{3c_1-(3c_1c_2-r_1r_2)c_2}{(1-(c_1c_2-\frac{r_1r_2}{3})^2)^{3/2}}>0,
\end{equation*}
$|c_1c_2+r_1r_2| < 1$, since $c_1c_2+r_1r_2=\cos(d_{15})$, and $|3c_1c_2-r_1r_2|< 3$, since 
$$
c_1c_2-\frac{r_1r_2}{3}=\cos(d_{16}),
$$ so 
\begin{align*}
&\frac{c_1-(c_1c_2+r_1r_2)c_2}{(1-(c_1c_2+r_2r_1)^2)^{3/2}}+\frac{3c_1-(3c_1c_2-r_1r_2)c_2}{(1-(c_1c_2-\frac{r_1r_2}{3})^2)^{3/2}}\\
> &\frac{c_1+c_2}{(1-(c_1c_2+r_1r_2)^2)^{3/2}}+\frac{3c_1+3c_2}{(1-(c_1c_2-\frac{r_1r_2}{3})^2)^{3/2}}
\geq 0
\end{align*}
because $c_1 \geq -c_2 $ for $(c_1,c_2) \in B$. Then $m$ is always positive in $B$, so every element $(c_1,c_2) \in C$ corresponds to the special central configuration of the form 
\begin{align*}
&\q_1=\begin{bmatrix}\sqrt{1-c_1^2} \\ 0 \\ 0 \\ c_1\end{bmatrix}, \q_2=\begin{bmatrix}-\frac{\sqrt{1-c_1^2}}{3} \\ \frac{2\sqrt{2}\sqrt{1-c_1^2}}{3} \\ 0 \\ c_1\end{bmatrix}, \q_3=\begin{bmatrix}-\frac{\sqrt{1-c_1^2}}{3}\\-\frac{\sqrt{2}\sqrt{1-c_1^2}}{3}\\ \frac{\sqrt{6}\sqrt{1-c_1^2}}{3}\\c_1\end{bmatrix}, \q_4=\begin{bmatrix} -\frac{\sqrt{1-c_1^2}}{3}\\-\frac{\sqrt{2}\sqrt{1-c_1^2}}{3}\\-\frac{\sqrt{6}\sqrt{1-c_1^2}}{3}\\c_1\end{bmatrix},\\
&\q_5=\begin{bmatrix}\sqrt{1-c_2^2} \\ 0 \\ 0 \\ c_2\end{bmatrix}, \q_6=\begin{bmatrix}-\frac{\sqrt{1-c_2^2}}{3} \\ \frac{2\sqrt{2}\sqrt{1-c_2^2}}{3} \\ 0 \\ c_2\end{bmatrix}, \q_7=\begin{bmatrix}-\frac{\sqrt{1-c_2^2}}{3}\\-\frac{\sqrt{2}\sqrt{1-c_2^2}}{3}\\ \frac{\sqrt{6}\sqrt{1-c_2^2}}{3}\\c_2\end{bmatrix}, \q_8=\begin{bmatrix} -\frac{\sqrt{1-c_2^2}}{3}\\-\frac{\sqrt{2}\sqrt{1-c_2^2}}{3}\\-\frac{\sqrt{6}\sqrt{1-c_2^2}}{3}\\c_2\end{bmatrix},\\
&m_1=m_2=m_3=m_4=1,\\
&m_5=m_6=m_7=m_8=m(c_1,c_2),\end{align*}
where $m(c_1,c_2)$ is as defined in (\ref{mtet}).
This remark completes the proof.
\end{proof}
\bigskip

To get a visual understanding of the solutions to $g$, we substitute (\ref{mtet}) into (\ref{q1tet}) and implicitly plot the solutions to the resulting equation. We can then see the set of solutions to $g(c_1,c_2)=0$. As expected, the solutions are symmetric about $c_1=-c_2$ and the right branch of the solution is a path-disconnecting set of $B$.
\smallskip

\begin{figure}[htbp] 
   \centering
   \includegraphics[width=2in]{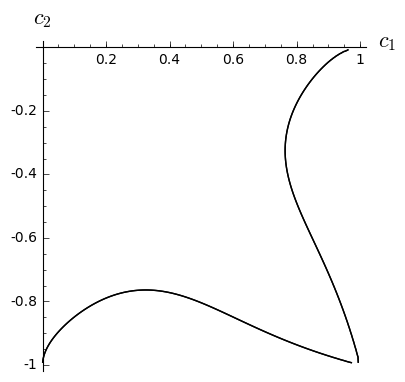}
   \caption{}
   \label{Fig1}
\end{figure}

\smallskip

\subsection{Special central configurations for four bodies in $\S^3$.}
We first show that every special central configuration of the 4-body problem in $\S^3$ occurs on a great 2-sphere, and then prove that there are no quadrilateral special central configurations on $\S^1$. Finally, we derive a necessary and sufficient condition for the existence of tetrahedral special central configurations.
\begin{proposition}
Every 4-body special central configuration in $\S^3$ occurs on a great 2-sphere.
\end{proposition}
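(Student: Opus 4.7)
The plan is to exploit the stationarity condition $\nabla_{\q_1}U(\q)=0$ at just one body in order to extract a non-trivial linear relation among the four position vectors, and then to observe that four points on $\S^3$ spanning a subspace of dimension at most three in $\R^4$ automatically lie on a great 2-sphere.

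Using the explicit expression (\ref{equ:F}) for the gradient in $\S^3$ (so $\sigma=1$), the equation $\nabla_{\q_1}U(\q)=0$ can be rearranged as
\begin{equation*}
c_{12}\q_2+c_{13}\q_3+c_{14}\q_4=\Lambda_1\,\q_1,
\end{equation*}
where $c_{1j}=\dfrac{m_1 m_j}{(1-(\q_1\cdot\q_j)^2)^{3/2}}$ and $\Lambda_1=\sum_{j=2}^{4}c_{1j}(\q_1\cdot\q_j)$. For any admissible configuration the masses are positive and no two bodies coincide or are antipodal, so each denominator is strictly positive and finite, hence $c_{1j}>0$ for $j=2,3,4$. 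The displayed equation is therefore a genuinely non-trivial linear dependence among $\q_1,\q_2,\q_3,\q_4$.

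It follows that the four vectors span a linear subspace $V\subset\R^4$ of dimension at most three. If $\dim V=3$, then $V\cap\S^3$ is exactly a great 2-sphere containing all four bodies. If $\dim V\le 2$, the four bodies lie on a great circle (or pair of antipodal points) of $\S^3$, and this is contained in any great 2-sphere whose defining 3-dimensional hyperplane contains $V$. In either case the configuration lies on a great 2-sphere, which proves the claim.

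There is essentially no technical obstacle here: the proof reduces to a one-line linear-algebra observation that the gradient equation at a single body already forces the four position vectors to be linearly dependent in $\R^4$. The only delicate point is to justify $c_{1j}>0$ for every $j$; this uses only the standing admissibility hypotheses (positive masses, no collisions, no antipodal pairs), under which $\overline{U}$ and its gradient are well defined at $\q$. Note that this elementary dimension argument is genuinely special to $N=4$: for $N\ge 5$ four or more of the $\q_i$ are automatically linearly dependent in $\R^4$, and a different analysis is needed.
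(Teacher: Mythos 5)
Your proposal is correct and follows essentially the same route as the paper: both use the single stationarity equation $\F_1=\nabla_{\q_1}\overline{U}=0$ from \eqref{equ:F} to produce a non-trivial linear dependence among $\q_1,\dots,\q_4$, so the span has dimension at most $3$ and the configuration lies on a great $2$-sphere. Your added care in noting $c_{1j}>0$ (so the dependence is genuinely non-trivial) and in treating the degenerate case $\dim V\le 2$ only makes explicit what the paper's terser proof leaves implicit.
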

\begin{proof}
Let $\q=(\q_1, ..., \q_4)$ be a special central configuration in $\S^3$. 
Then $\F_i=0, i=1,...,4$. Recall equation \eqref{equ:F}. We obtain 
\[ 0=\F_1=\sum_{j=2}^4 \frac{m_1m_j(\q_j-\cos d_{1j}\q_1)}{\sin^3{d_{1j}}}=\sum_{j=2}^4 \frac{m_1m_j\q_j}{\sin^3{d_{1j}}}- \sum_{j=2}^4 \frac{m_1m_j\cos d_{1j}}{\sin^3{d_{1j}}} \q_1.  \] 
This implies that the four vectors $\q_1, ...,\q_4$ are linearly dependent. Thus their rank is at most $3$, and they must lie on a great 2-sphere.
\end{proof}

\begin{proposition}There are no 4-body special central configurations on a great circle.\end{proposition}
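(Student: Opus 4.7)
The plan is to reduce a putative 4-body special central configuration on a great circle to a linear system in the masses whose coefficient matrix is provably invertible, thereby ruling out positive-mass solutions. Order the bodies cyclically at angles $\theta_1<\theta_2<\theta_3<\theta_4$ on the circle and let $a,b,c,d>0$ be the successive arc gaps, so that $a+b+c+d=2\pi$. On a great circle the tangent space at each body is one-dimensional, so $\F_i=0$ collapses to the scalar equation $\sum_{j\neq i}\epsilon_{ij}m_j/\sin^2 d_{ij}=0$, where $\epsilon_{ij}\in\{+1,-1\}$ records whether body $j$ lies CCW or CW from body $i$ along the shorter arc.

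The first step is to show every gap lies strictly in $(0,\pi)$. Otherwise all four bodies would occupy a closed semicircle, and the body at the extreme end would see every other body on the same side; its tangential force equation would then reduce to a sum of three strictly positive terms, which cannot vanish. This is the one-dimensional analogue of the hemisphere obstructions (1)--(2) cited from \cite{DiacuZhu}. With every gap in $(0,\pi)$ and no antipodal pair allowed (a force term would be singular), the signs $\epsilon_{12},\epsilon_{23},\epsilon_{34},\epsilon_{14}$ are fixed by the cyclic order, while $\epsilon_{13}$ and $\epsilon_{24}$ are determined by whether $a+b$ and $b+c$ fall below or above $\pi$, giving four sign cases.

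The four scalar force equations then assemble into a skew-symmetric system $M\mathbf{m}=0$ with $M_{ij}=\epsilon_{ij}/\sin^2 d_{ij}$. A nontrivial solution requires $\det M=\mathrm{Pf}(M)^2=0$, where $\mathrm{Pf}(M)=M_{12}M_{34}-M_{13}M_{24}+M_{14}M_{23}$, so I would rule out the vanishing of the Pfaffian via the Ptolemy-type identity
\[
\sin(a+b)\sin(b+c)=\sin a\sin c+\sin b\sin(a+b+c),
\]
which is immediate from product-to-sum. Writing $R=\sin^2 a\sin^2 c$, $Q=\sin^2(a+b)\sin^2(b+c)$, and $P=\sin^2 b\sin^2(a+b+c)$, and using $\sin^2(a+d)=\sin^2(b+c)$ and $\sin^2(c+d)=\sin^2(a+b)$ to identify the two possible short diagonals, the Pfaffian reduces to $1/R-1/P-1/Q$ when $\epsilon_{13}\epsilon_{24}=+1$ and to $1/R+1/Q-1/P$ otherwise. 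Since every gap lies in $(0,\pi)$, one has $a+b+c>\pi$ and hence $\sin(a+b+c)<0$; tracking signs in the identity yields $\sqrt R=\sqrt P+\sqrt Q$ in the first two cases (so $R>P+Q$) and $\sqrt P=\sqrt R+\sqrt Q$ in the remaining two (so $P>R+Q$). The elementary chain $1/R<1/(P+Q)<1/P+1/Q$ then forces $\mathrm{Pf}(M)<0$ when $\epsilon_{13}\epsilon_{24}=+1$, while $1/P<1/Q$ together with positivity of $1/R$ forces $\mathrm{Pf}(M)>0$ in the other two cases. In every case $M$ is invertible, so $\mathbf{m}=0$, contradicting positivity of the masses.

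The main obstacle is the sign bookkeeping for the Pfaffian: $\sin(a+b+c)$ is necessarily negative once all four gaps lie in $(0,\pi)$, and the short-arc distances $d_{13}$ and $d_{24}$ switch between $a+b$ versus $c+d$, and $b+c$ versus $a+d$, depending on the case. The Ptolemy-type identity above is the single structural fact that makes all four cases collapse to a clean comparison among $R$, $P$, and $Q$; once that identity is isolated and the signs are handled carefully, the linear-algebra conclusion becomes routine.
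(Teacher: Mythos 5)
Your argument is correct, and it reaches the paper's pivotal necessary condition by a genuinely different---and in one respect stronger---route. The paper first normalizes the configuration ($\vp_1=0$, $\vp_3<\pi$, $\vp_4\in(\pi,\pi+\vp_3)$, which fixes the sign pattern once and for all), eliminates the masses by hand from the four critical-point equations to obtain $\frac{1}{r_{12}r_{34}}=\frac{1}{r_{23}r_{14}}+\frac{1}{r_{13}r_{24}}$, and then refutes this relation with four order-theoretic inequalities of the form $r_{12}>\min\{r_{13},r_{14}\}$, obtained by locating points and antipodes on semicircles; its $16$ cases collapse to two chains of inequalities, each forcing equal distances and hence an antipodal, singular configuration. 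You instead package the four tangential equations as a skew-symmetric system $M\mathbf{m}=0$, so that positive masses force $\mathrm{Pf}(M)=0$; with $R=r_{12}r_{34}$, $Q=r_{13}r_{24}$, $P=r_{14}r_{23}$ one has $\mathrm{Pf}(M)=\tfrac1R-\tfrac{\epsilon_{13}\epsilon_{24}}{Q}-\tfrac1P$, which in the paper's normalized case ($\epsilon_{13}=\epsilon_{24}=+1$) is exactly its condition \eqref{eq_scc4_s1_5}. You then kill the Pfaffian in one stroke with the exact identity
\begin{equation*}
\sin(a+b)\sin(b+c)=\sin a\sin c+\sin b\sin(a+b+c),
\end{equation*}
which I verified (product-to-sum), together with the observation that $d<\pi$ forces $\sin(a+b+c)<0$: writing $q=\sin(a+b)\sin(b+c)$, the identity gives $\sqrt{R}=\sqrt{P}+\sqrt{Q}$ when $q>0$ (i.e.\ $\epsilon_{13}\epsilon_{24}=+1$) and $\sqrt{P}=\sqrt{R}+\sqrt{Q}$ when $q<0$, whence $\mathrm{Pf}(M)<0$ in the first case and $\mathrm{Pf}(M)>0$ in the second; your sign bookkeeping checks out, and the boundary case $q=0$ is an antipodal pair, which you correctly exclude as singular. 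What your route buys: no normalization or relabeling (all sign cases handled uniformly), no case explosion, and the strictly stronger conclusion $\det M=\mathrm{Pf}(M)^2>0$, so $M$ is invertible and \emph{no} nonzero mass vector of any sign exists, whereas the paper only contradicts the single derived equality; what the paper's route buys is an argument using only the circular order, with no trigonometric identity needed. One gloss you should state more carefully: the reduction to one scalar equation per body is not because ``the tangent space at each body is one-dimensional''---the special central configuration condition is $\F_i=\nabla_{\q_i}\overline U=0$ in the ambient sphere---but because each term $\frac{m_im_j(\q_j-\cos d_{ij}\,\q_i)}{\sin^3 d_{ij}}$ lies in the $2$-plane of the great circle and is orthogonal to $\q_i$, so $\F_i$ is automatically tangent to the circle; this point is also left implicit in the paper, and once it is made, your proof is complete.
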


\begin{proof}
We first derive a necessary condition on the mutual distances, and then show that no non-singular configurations satisfy the condition.
\smallskip

 We may assume that the positions of the masses are given by the polar coordinates $0=\vp_1<\vp_2<\vp_3<\vp_4<2\pi,$ with $\vp_3<\pi$ and $\vp_4 \in (\pi, \pi+\vp_2) \cup (\pi+\vp_2, \pi+\vp_3)$. The cases $\vp \in (\pi, \pi+\vp_2)$ and $\vp \in (\pi+\vp_2, \pi+\vp_3)$ differ only by a rotation $-\vp_4$ and the relabling
\begin{equation*}\q'_1=\q_4, \ \q'_2=\q_1,\ \q'_3=\q_2, \ \q'_4=\q_3,\end{equation*}
so it is sufficient to consider the case $\vp_4 \in (\pi, \pi+\vp_2)$.
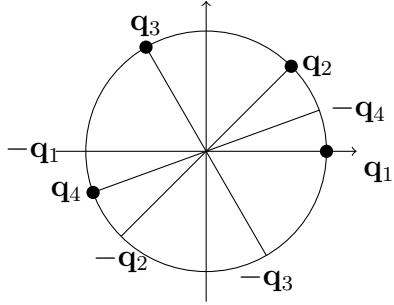
\begin{figure}[!h]
  	\centering
    \begin{tikzpicture}
    \draw (0,  0) circle (1.6);
    \draw[-] (0,0) -- (45:1.6) node[right] {$\q_2$};
    \draw[-] (0,0) -- (225:1.6) node[below] {$-\q_2$};
     \fill (45:1.6) circle (2.5pt);

   \draw[-] (0,0) -- (120:1.6) node[above] {$\q_3$};
       \draw[-] (0,0) -- (300:1.6) node[below] {$-\q_3$};
       \fill (120:1.6) circle (2.5pt);
       
    \draw[-] (0,0) -- (200:1.6) node[left] {$\q_4$};
        \draw[-] (0,0) -- (20:1.6)node[right] {$-\q_4$};
 \fill (200:1.6) circle (2.5pt);
 
    \node[below] at(0:2.3) {$\mathbf{q}_1$};
    \node at(180:2.3) {$-\mathbf{q}_1$};
   \fill (0:1.6) circle (2.5pt);
    
     \draw [->] (0, -2)--(0, 2) node (zaxis) [left]{};
           \draw [->](-2, 0)--(2, 0) node (yaxis) [right] {};
    \end{tikzpicture}
    \caption{A configuration for four masses on a great circle}
    \label{fig:4-equal-m}
    \end{figure}
Note that the potential \eqref{equ:potential} on $\S^3$ is $U=\sum\limits_{1 \leq i < j \leq N} m_i m_j \cot d_{ij}$. In this case, we can write it as
\begin{equation*}
\begin{split}
U(\vp_1,...,\vp_4)&= m_1m_2 \cot (\vp_2-\vp_1) + m_1m_3 \cot (\vp_3-\vp_1)-m_1m_4 \cot (\vp_4-\vp_1)\\
&+m_2m_3 \cot (\vp_3-\vp_2)
 +m_2m_4 \cot (\vp_4-\vp_2) +m_3m_4 \cot (\vp_4-\vp_3).
\end{split}
\end{equation*}      
Since $d_{14}=2\pi-(\vp_4-\vp_1)$, so the sign before the term $m_1m_4$ is negative. 
Since special central configuration is critical point of $U$, so taking the derivative with respect to $\vp_1$, we have 
\[ \frac{m_1m_2}{ \sin^2 (\vp_2-\vp_1)} +\frac{m_1m_3}{ \sin^2 (\vp_3-\vp_1)}-\frac{m_1m_4}{ \sin^2 (\vp_4-\vp_1)}=0.  \]
Similarly, we obtain
\begin{align} 
 \frac{m_2}{r_{12}} &+ \frac{m_3}{r_{13}}=\frac{m_4}{r_{14}}, \label{eq_scc4_s1_1}\\
 \frac{m_3}{r_{23}} &+ \frac{m_4}{r_{24}}=\frac{m_1}{r_{12}}, \label{eq_scc4_s1_2} \\
 \frac{m_1}{r_{13}} &+ \frac{m_2}{r_{23}}=\frac{m_4}{r_{34}}, \label{eq_scc4_s1_3}\\
 \frac{m_2}{r_{24}} &+ \frac{m_3}{r_{34}}=\frac{m_1}{r_{14}}, \label{eq_scc4_s1_4}  
  \end{align}
 where $r_{ij}=\sin^2d_{ij}= \sin^2 (\vp_i-\vp_j)$. Note that $\eqref{eq_scc4_s1_1}\frac{1}{r_{34}}-\eqref{eq_scc4_s1_3}\frac{1}{r_{14}}$ leads to
 \[   m_2\left(\frac{1}{r_{12}r_{34}}-\frac{1}{r_{23}r_{14}} \right) +\frac{m_3}{r_{13}r_{34}} =  \frac{m_1}{r_{13}r_{14}}. \]  
 And that $\eqref{eq_scc4_s1_4}\frac{1}{r_{13}}$ gives 
 \[  \frac{m_2}{r_{24}r_{13}}  +\frac{m_3}{r_{13}r_{34}} =  \frac{m_1}{r_{13}r_{14}}. \]
 Thus we get the necessary condition 
 \begin{equation} \label{eq_scc4_s1_5}
  \frac{1}{r_{12}r_{34}}=\frac{1}{r_{23}r_{14}}+\frac{1}{r_{13}r_{24}}.
  \end{equation}

We now show that equation \eqref{eq_scc4_s1_5} can never be satisfied. Note that $r_{ij}$ is also equal to $\sin^2 d(\pm\q_i, \pm \q_j)$. Let us look now at the upper semicircle determined by $\q_2$ and $-\q_2$. Between the two boundary points, there lie $\q_3$, $-\q_1$, and $\q_4$ consecutively. Thus 
 \[  0<d(\q_2, \q_3)< d(\q_2, -\q_1)<d(\q_2, \q_4)<\pi, \]
 and we get 
 \[ r_{12}= \sin^2 d(\q_2, -\q_1)> \min \{ \sin^2 d(\q_2, \q_3), \sin^2 d(\q_2, \q_4) \}= \min \{ r_{23}, r_{24} \}.  \]
 Similarly, by focusing on other semicircles determined by $\q_i$ and $-\q_i$, we get other similar inequalities, coming to four in total:
 \begin{align} 
 r_{12}&> \min \{ r_{13}, r_{14} \},  \ &{\rm i.e., }\ \ \  & \frac{1}{r_{12}}<\max  \left\{\frac{1}{r_{13}},\frac{1}{r_{14}}  \right\},  \label{ineq_1}\\
 r_{12}&> \min \{ r_{23}, r_{24} \},  \ &{\rm i.e., }\ \ \ & \frac{1}{r_{12}}<\max\left\{\frac{1}{r_{23}},\frac{1}{r_{24}}  \right\},  \label{ineq_2}\\
 r_{34}&> \min \{ r_{13}, r_{23} \},   \ & {\rm i.e., }\ \ \  & \frac{1}{r_{34}}<\max\left\{\frac{1}{r_{13}},\frac{1}{r_{23}}  \right\}, \label{ineq_3}\\
 r_{34}&> \min \{ r_{14}, r_{24} \},  \ & {\rm i.e., }\ \ \   & \frac{1}{r_{34}}<\max\left\{\frac{1}{r_{14}},\frac{1}{r_{24}}  \right\} . \label{ineq_4}
   \end{align}  
 Each of them leads to two different cases, so there are $16$ cases in total. However,  none of them are consistent with equation \eqref{eq_scc4_s1_5}.  We begin with (\ref{ineq_1}), so if $\frac{1}{r_{14}}\ge\frac{1}{r_{13}}$, we have $\frac{1}{r_{12}}<\frac{1}{r_{14}}$. We claim that $\frac{1}{r_{13}}\ge\frac{1}{r_{23}}$.   If not, by inequality \eqref{ineq_3}, we have $\frac{1}{r_{34}}<\frac{1}{r_{23}}$. Multiplying them we get  $ \frac{1}{r_{12}r_{34}}<\frac{1}{r_{23}r_{14}}$, which contradicts with    equation \eqref{eq_scc4_s1_5}. Thus the first inequality implies:
 \begin{itemize}
 \item if $\frac{1}{r_{14}}\ge\frac{1}{r_{13}}$, then $\frac{1}{r_{13}}\ge\frac{1}{r_{23}}$, i.e.,  $\frac{1}{r_{14}}\ge\frac{1}{r_{13}}\ge\frac{1}{r_{23}}$;
 \item if $\frac{1}{r_{13}}\ge\frac{1}{r_{14}}$, then $\frac{1}{r_{14}}\ge\frac{1}{r_{24}}$, i.e.,  $\frac{1}{r_{13}}\ge\frac{1}{r_{14}}\ge \frac{1}{r_{24}} $.
 \end{itemize}
By treating the other inequalities similarly, we get the conditions
\begin{align} 
  \frac{1}{r_{14}}&\ge\frac{1}{r_{13}}\ge\frac{1}{r_{23}},   \ \ \ {\rm or}\ \ \ \frac{1}{r_{13}}\ge\frac{1}{r_{14}}\ge \frac{1}{r_{24}}, \label{ineq_5} \\
  \frac{1}{r_{24}}&\ge\frac{1}{r_{23}}\ge\frac{1}{r_{13}},   \ \ \ {\rm or}\ \ \ \frac{1}{r_{23}}\ge\frac{1}{r_{24}}\ge \frac{1}{r_{14}}, \label{ineq_6} \\ \frac{1}{r_{13}}&\ge\frac{1}{r_{23}}\ge\frac{1}{r_{24}},   \ \ \ {\rm or} \ \ \ \frac{1}{r_{23}}\ge\frac{1}{r_{13}}\ge \frac{1}{r_{14}}, \label{ineq_7} \\ \frac{1}{r_{14}}&\ge\frac{1}{r_{24}}\ge\frac{1}{r_{23}},   \ \ \ {\rm or}\ \ \ \frac{1}{r_{24}}\ge\frac{1}{r_{14}}\ge \frac{1}{r_{13}}. \label{ineq_8}   
   \end{align}
 Denote the left inequality of condition $(i)$ by $(i+)$, and the right one by $(i-)$.  Clearly (\ref{ineq_5}+) implies (\ref{ineq_6}+) is false, and (\ref{ineq_5}$-$) implies (\ref{ineq_6}$-$) is false, so we have either  (\ref{ineq_5}+),  (\ref{ineq_6}$-$) or (\ref{ineq_5}$-$), (\ref{ineq_6}+).
\smallskip

If we assume  (\ref{ineq_5}+) and  (\ref{ineq_6}$-$), then \begin{equation*}
 \frac{1}{r_{14}}\ge\frac{1}{r_{13}}\ge\frac{1}{r_{23}}\ge\frac{1}{r_{24}}\ge \frac{1}{r_{14}},
\end{equation*}
so $r_{14}=r_{13}=r_{23}=r_{24}$. Note that $r_{13}=r_{23}$ implies $\vp_2<\frac{\pi}{2}, \vp_3=\pi-\vp_2$, since $\vp_2<\vp_3<\pi$. But $r_{13}=r_{14}$ implies $\vp_4=2\pi-\vp_3$ since $\vp_3<\pi<\vp_4<\pi+\vp_3$. Combining these two gives $\vp_4=\pi+\vp_2$, which is a singular configuration.
\smallskip

If we assume (\ref{ineq_5}$-$) and (\ref{ineq_6}+), then \begin{equation*} 
\frac{1}{r_{13}}\ge\frac{1}{r_{14}}\ge \frac{1}{r_{24}}\ge\frac{1}{r_{23}}\ge\frac{1}{r_{13}},
\end{equation*}
which, similarly to the previous case, leads to a singular configuration.
\smallskip

Therefore there is no nonsingular configuration that satisfies (\ref{ineq_5})-(\ref{ineq_8}), so there are no special central configurations with four masses on a great circle.
\end{proof}

To prove our next proposition, we will rely on the following linear algebra result.

\begin{lemma}
Let $\{\v_0, \v_1, \ldots, \v_n\}$ be a collection of vectors in $\R^n$ with rank $n$. Then $$D_0\v_0-D_1\v_1+\ldots+(-1)^nD_n\v_n=0,$$
where $D_k=\det(\v_0,\ldots, \v_{k-1},\v_{k+1}, \ldots, \v_n)$.\end{lemma}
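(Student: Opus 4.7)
The plan is to prove this by a cofactor-expansion trick: we construct an $(n+1)\times(n+1)$ matrix that is manifestly singular (two identical rows) and expand it along one row to produce the desired linear relation component-by-component.

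Concretely, write each $\v_i=((v_i)_1,\dots,(v_i)_n)^T$ and let $A=[\v_0\mid\v_1\mid\cdots\mid\v_n]$ be the $n\times(n+1)$ matrix whose columns are the given vectors. For each fixed coordinate index $j\in\{1,\dots,n\}$, form the $(n+1)\times(n+1)$ matrix $M_j$ obtained from $A$ by prepending the $j$-th row of $A$ on top:
\[
M_j=\begin{pmatrix}(v_0)_j & (v_1)_j & \cdots & (v_n)_j \\ (v_0)_1 & (v_1)_1 & \cdots & (v_n)_1 \\ \vdots & \vdots & & \vdots \\ (v_0)_n & (v_1)_n & \cdots & (v_n)_n\end{pmatrix}.
\]
Then $M_j$ has two equal rows (the first row and the $(j+1)$-st row), so $\det M_j=0$.

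Next I would expand $\det M_j$ along the first row. The $(1,k+1)$-minor is the determinant of the $n\times n$ matrix obtained from $A$ by deleting column $k$, which is exactly $D_k$. Hence
\[
0=\det M_j=\sum_{k=0}^{n}(-1)^{k}(v_k)_j\,D_k.
\]
Since this identity holds for each coordinate $j=1,\dots,n$, assembling the components yields the vector identity
\[
\sum_{k=0}^{n}(-1)^{k}D_k\,\v_k=D_0\v_0-D_1\v_1+\cdots+(-1)^n D_n\v_n=0,
\]
which is the claim. The rank hypothesis is not needed for the identity to hold; its role is only to guarantee that the relation is nontrivial (some $D_k\neq 0$), because rank $n$ forces at least one of the $n$-subsets of $\{\v_0,\dots,\v_n\}$ to be a basis of $\R^n$.

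There is no serious obstacle here; the only thing to be careful about is the bookkeeping of signs and indices in the cofactor expansion — in particular, confirming that the $(1,k+1)$-minor really is $\det(\v_0,\dots,\v_{k-1},\v_{k+1},\dots,\v_n)$ and that the alternating sign $(-1)^{1+(k+1)}=(-1)^k$ produces exactly the pattern $D_0-D_1+D_2-\cdots+(-1)^n D_n$ stated in the lemma.
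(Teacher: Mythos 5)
Your proof is correct, and it takes a genuinely different route from the paper's. The paper assumes, without loss of generality, that $D_0\neq 0$ and applies Cramer's rule to the linear system $(\v_1,\ldots,\v_n)\mathbf{x}=-\v_0$, computing $x_k=(-1)^k D_k/D_0$ by swapping $-\v_0$ into the $k$-th column and clearing denominators to obtain the alternating relation. Your argument instead builds, for each coordinate $j$, the singular $(n+1)\times(n+1)$ matrix $M_j$ with a duplicated row and Laplace-expands along the top row; your sign bookkeeping is right, since the cofactor sign is $(-1)^{1+(k+1)}=(-1)^k$ and the $(1,k+1)$-minor is precisely $D_k$, so each coordinate of $\sum_{k=0}^n(-1)^kD_k\v_k$ vanishes. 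What your approach buys: it needs no nondegeneracy assumption whatsoever, so the identity holds unconditionally (if the rank is below $n$ all the $D_k$ vanish and the statement is trivial), and it avoids the paper's ``WLOG $D_0\neq 0$'' relabeling, which strictly speaking requires checking that permuting the vectors permutes the $D_k$ compatibly with the alternating signs. You also correctly identified the true role of the rank hypothesis, namely guaranteeing that some $D_k\neq 0$ so the dependence relation is nontrivial, which is what the application in the four- and five-body propositions actually uses. What the paper's Cramer's-rule proof buys is brevity and a transparent interpretation of the coefficients $(-1)^kD_k/D_0$ as the solution of an explicit linear system, but your version is the more robust and more general of the two.
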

\begin{proof}
Without loss of generality, we may assume that $D_0 \neq 0$. Then we can use Cramer's rule to solve the linear system $$(\v_1,\ldots, \v_n)\mathbf{x}=-\v_0, \mathbf{x}=(x_1, x_2, \ldots, x_n)^T.$$ We get \begin{align*}
x_k&=\frac{\det(\v_1, \ldots, \v_{k-1}, -\v_0, \v_{k+1}, \ldots, \v_n)}{\det(\v_1, \ldots, \v_n)}\\
&=(-1)^k\frac{\det(\v_0, \v_1, \ldots, \v_{k-1}, \v_{k+1}, \ldots, \v_n)}{D_0}=(-1)^k\frac{D_k}{D_0}.\end{align*}
Then $\sum_{k=0}^n (-1)^k D_k\v_k=0$, so the proof is complete.
\end{proof}

\begin{proposition} Let $\q$ be a tetrahedron configuration of four masses $m_0, m_1, m_2, m_3,$ on $\S^2_{xyz}$ of the form
$$\q_0=\begin{bmatrix}1\\0\\0\end{bmatrix}, \q_1=\begin{bmatrix}x_1\\y_1\\0\end{bmatrix}, \q_2=\begin{bmatrix}x_2\\y_2\\z_2\end{bmatrix}, \q_3=\begin{bmatrix}x_3\\y_3\\z_3\end{bmatrix}.$$
Then $\q$ is a special central configuration if and only if the following three conditions are satisfied:
\begin{enumerate}
\item $\q_0, \q_1, \q_2, \q_3$ are not all in the same hemisphere;
\item $\sin{d_{01}}\sin{d_{23}}=\sin{d_{02}}\sin{d_{13}}=\sin{d_{03}}\sin{d_{12}}$;
\item $m_0=-m_3\frac{D_0\sin^3 {d_{01}}}{D_3\sin^3 {d_{13}}}, m_1=m_3\frac{D_1\sin^3{d_{01}}}{D_3\sin^3{d_{03}}}$, and $m_2=-m_3\frac{D_2\sin^3{d_{02}}}{D_3\sin^3{d_{03}}}$, where $D_0=\det(\q_1, \q_2, \q_3), D_1=\det(\q_0, \q_2, \q_3), D_2=\det(\q_0, \q_1, \q_3)$, and $D_3=\det(\q_0, \q_1, \q_2)$.
\end{enumerate}
\end{proposition}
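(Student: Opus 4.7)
The plan is to exploit the preceding Lemma. Since a tetrahedron configuration $\q_0,\q_1,\q_2,\q_3$ on $\S^2_{xyz}$ has four non-coplanar vertices, the corresponding vectors have rank~$3$ in $\R^3$ and admit a unique (up to scalar) linear dependence
\[
D_0\q_0 - D_1\q_1 + D_2\q_2 - D_3\q_3 = 0.
\]
The key observation is that, for each $i\in\{0,1,2,3\}$, the force equation $\F_i=0$ from \eqref{equ:F} can be rewritten as
\[
-c_i\q_i + \sum_{j\neq i}\frac{m_j}{\sin^3 d_{ij}}\q_j = 0,\qquad c_i:=\sum_{j\neq i}\frac{m_j\cos d_{ij}}{\sin^3 d_{ij}},
\]
which is itself a linear dependence among $\q_0,\ldots,\q_3$ and hence must equal some scalar $\lambda_i$ times the unique relation above.

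For the necessity direction, condition~(1) is immediate: a tetrahedron does not lie on any great circle, so by the hemisphere result recalled at the start of Section~5 a tetrahedral special central configuration cannot lie in any closed hemisphere. Matching the coefficients of $\F_i=0$ against $(D_0,-D_1,D_2,-D_3)$ for each $i$ produces, for every ordered pair $i\neq j$, an identity $m_j/\sin^3 d_{ij} = \pm\lambda_i D_j$ with an explicit sign. For each fixed $j$ this yields three expressions (from the three choices $i\neq j$) that must agree pairwise; eliminating the $\lambda_i$'s from them gives, on the one hand, the mass-ratio identities of~(3) and, on the other, the distance identities of~(2). For instance, comparing the expressions for $m_3$ obtained from $\F_0=0$ and $\F_1=0$ gives $\lambda_0\sin^3 d_{03}=\lambda_1\sin^3 d_{13}$; combining this with the analogous equality arising from the $m_2$ comparison yields $\sin d_{02}\sin d_{13}=\sin d_{03}\sin d_{12}$, and the other equalities in~(2) arise symmetrically.

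For the sufficiency direction, substitute the masses from~(3) into each $\F_i=0$ and verify that its coefficient vector is proportional to $(D_0,-D_1,D_2,-D_3)$, so that $\F_i=0$ follows from the Lemma. The case $i=0$ follows from~(3) alone; the cases $i=1,2,3$ reduce, after substitution, to one of the three equalities of~(2) (e.g.\ $\F_1=0$ reduces exactly to $\sin d_{02}\sin d_{13}=\sin d_{03}\sin d_{12}$). Condition~(1) is equivalent to the origin lying strictly inside the tetrahedron with vertices $\q_0,\ldots,\q_3$, which in turn is equivalent to $D_0$, $-D_1$, $D_2$, $-D_3$ all having the same sign; this common sign is exactly what is needed to make the masses determined by~(3) positive, hence to obtain a genuine configuration. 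The main technical difficulty is not conceptual but combinatorial---bookkeeping of signs across twelve coefficient comparisons---but organizing the equalities by the index $j$ of the mass $m_j$ being matched makes the whole argument mechanical once the Lemma is invoked.
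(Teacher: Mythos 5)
Your proof is correct, and while it rests on the same key Lemma as the paper's proof, you deploy it more uniformly. The paper's necessity argument is coordinate-based: it extracts condition (2) from the $z$-components of $\F_0$ and $\F_1$ (using the no-great-circle proposition to guarantee $y_1,z_2,z_3\neq 0$) and then solves for $m_2, m_1, m_0$ one at a time from individual components and an inner product with $(y_1,-x_1,0)^T$, obtaining the determinants $D_i$ only a posteriori as products like $y_1z_2$. You instead observe that each equation $\F_i=0$ is itself a nontrivial linear dependence among $\q_0,\ldots,\q_3$, which by rank~$3$ must be proportional to the unique dependence $D_0\q_0-D_1\q_1+D_2\q_2-D_3\q_3=0$; matching coefficients and eliminating the $\lambda_i$ then yields (2) and (3) simultaneously, with the bonus that $D_j\neq 0$ falls out for free (the coefficients $m_j/\sin^3 d_{ij}$ are positive). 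This also makes necessity and sufficiency visibly inverse to one another, whereas in the paper only the sufficiency direction invokes the Lemma. Two smaller differences: for positivity you replace the paper's case-by-case separating-subspace computation (showing $D_3>0\Rightarrow D_0<0,\ D_1>0,\ D_2<0$) with the cleaner equivalence that condition (1) holds iff the origin lies strictly inside the tetrahedron iff $(-1)^jD_j$ all share a sign — both arguments are sound, yours being shorter and more conceptual. One step you should make explicit in the sufficiency direction: substituting the masses of (3) into $\F_i$ controls only the coefficients of $\q_j$ for $j\neq i$; the coefficient of $\q_i$ is $-c_i$ with $c_i=\sum_{j\neq i}m_j\cos d_{ij}/\sin^3 d_{ij}$, which is not free. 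So after the Lemma collapses $\sum_{j\neq i}(m_j/\sin^3 d_{ij})\q_j$ to a multiple of $\q_i$, you still need the orthogonality $\F_i\cdot\q_i=0$ (which the paper records, and which follows from the degree-zero homogeneity of $U$) to conclude $\F_i=\mathbf{0}$ rather than merely $\F_i\parallel\q_i$. With that one sentence added, your argument is complete.
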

\begin{proof}
Suppose $\q$ is a special central configuration. Then clearly the four masses are not all in one hemisphere, and $\F_i=\nabla_{\q_i} U=0$ for $i=0,1,2,3$. Consider the $z$ components of $\F_0$ and $\F_1$, $$\frac{m_2z_2}{\sin^3{d_{02}}}+\frac{m_3z_3}{\sin^3{d_{03}}}=0,\ \ \ \frac{m_2z_2}{\sin^3{d_{12}}}+\frac{m_3z_3}{\sin^3{d_{13}}}=0.$$
Since there are no special central configurations on a great circle, $y_1$, $z_2$, and $z_3$ are non-zero, so $\sin^3{d_{03}}\sin^3{d_{12}}=\sin^3{d_{02}}\sin^3{d_{13}}$. By symmetry and a relabling of the masses, we also get the relation $\sin^3{d_{01}}\sin^3{d_{23}}=\sin^3{d_{03}}\sin^3{d_{12}}$. Therefore $$\sin{d_{03}}\sin{d_{12}}=\sin{d_{02}}\sin{d_{13}}=\sin{d_{01}}\sin{d_{23}}.$$
For the masses, we look at $$\F_0=m_1\frac{\q_1-\cos{d_{01}}\q_0}{\sin^3{d_{01}}}+m_2\frac{\q_2-\cos{d_{02}}\q_0}{\sin^3{d_{02}}}+m_3\frac{\q_3-\cos{d_{03}}\q_0}{\sin^3{d_{03}}},$$
we have the z component $\frac{m_2z_2}{\sin^3{d_{02}}}+\frac{m_3z_3}{\sin^3{d_{03}}}=0$, which implies \begin{equation}\label{tm2}m_2=-m_3\frac{\sin^3{d_{02}}z_3}{\sin^3{d_{03}}z_2}=-m_3 \frac{\sin^3{d_{02}}y_1z_3}{\sin^3{d_{03}}y_1z_2}=-m_3\frac{D_2\sin^3{d_{02}}}{D_3\sin^3{d_{03}}}.\end{equation}
The $y$ component is $\frac{m_1y_1}{\sin^3{d_{01}}}+\frac{m_2y_2}{\sin^3{d_{02}}}+\frac{m_3y_3}{\sin^3{d_{03}}}=0$, which, after substituting (\ref{tm2}), gives \begin{equation}\label{tm1}
m_1=m_3\frac{(y_2z_3-z_2y_3)\sin^3{d_{01}}}{y_1z_2\sin^3{d_{03}}}=m_3\frac{D_1\sin^3{d_{01}}}{D_3\sin^3{d_{03}}}.\end{equation}
For $m_0$, we look at the inner product of $(y_1,-x_1, 0)^T$ with $$\F_1=m_0\frac{\q_0-\cos{d_{01}}\q_1}{\sin^3{d_{01}}}+m_2\frac{\q_2-\cos{d_{12}}\q_1}{\sin^3{d_{12}}}+m_3\frac{\q_3-\cos{d_{13}}\q_1}{\sin^3{d_{13}}}=\mathbf{0}$$
to get \begin{align*}
0&=\frac{m_0y_1}{\sin^3{d_{01}}}+\frac{m_2(x_2y_1-x_1y_2)}{\sin^3{d_{12}}}+\frac{m_3(y_1x_3-x_1y_3)}{\sin^3{d_{13}}}\\
&=\frac{m_0y_1}{\sin^3{d_{01}}}-\frac{m_3\sin^3{d_{02}}}{\sin^3{d_{03}}\sin^3{d_{12}}} \frac{z_3(x_2y_1-x_1y_2)}{z_2}+\frac{m_3(x_3y_1-x_1y_3)}{\sin^3{d_{13}}}\\
&=\frac{m_0y_1}{\sin^3{d_{01}}}-\frac{m_3(x_2y_1z_3-x_1y_2z_3-y_1x_3z_2+x_1y_3z_2)}{z_2\sin^3{d_{13}}}.\end{align*}
So we have \begin{equation}
\label{tm0}m_0=m_3\frac{\sin^3{d_{01}}}{\sin^3{d_{13}}}\frac{y_1(x_2z_3-x_3z_2)-x_1(y_2z_3-y_3z_2)}{y_1z_2}=-m_3\frac{D_0\sin^3{d_{01}}}{D_3\sin^3{d_{13}}}.\end{equation}
\medskip

Conversely, Suppose that $\q$ is a configuration satisfying the above three conditions. We prove that $\q$ is a special central configuration, i.e. $F_i=\mathbf{0}$ and $m_i>0$ for $i=0,1,2,3$. We can easily see that $\F_i \cdot \q_i=0$ for $i=0,1,2,3$. $\F_0\cdot e_1$ is clearly zero since $\q_1=e_1$, and $\F_0\cdot e_2=\F_0 \cdot e_3=0$ by (\ref{tm2}) and (\ref{tm1}), so $\F_0=\mathbf{0}$.
\smallskip

For $i=1,2,3$, $\F_i=\mathbf{0}$ if and only if $\F_i \cdot \v_{ij}=0, j=1,2,3$, where $\v_{i1}=\q_i$, and $\{\v_{i1}, \v_{i2}, \v_{i3}\}$ is an orthonormal basis of $\R^3$. As shown above, $\F_i \cdot \v_{i1}=0$. For $i=1, j=2,3$, we have \begin{align*}
\F_1 \cdot \v_{1j}&=\biggl(m_0\frac{\q_0-\cos{d_{01}}\q_1}{\sin^3{d_{01}}}+m_2\frac{\q_2-\cos{d_{12}}\q_1}{\sin^3{d_{12}}}+m_3\frac{\q_3-\cos{d_{13}}\q_1}{\sin^3{d_{13}}}\biggr) \cdot \v_{1j}\\
&=\biggl(\frac{m_0\q_0}{\sin^3{d_{01}}}+\frac{m_2\q_2}{\sin^3{d_{12}}}+\frac{m_3\q_3}{\sin^3{d_{13}}}\biggr)\cdot \v_{1j}\\
&= \biggl(-m_3\frac{D_0\q_0}{D_3\sin^3{d_{13}}}-m_3\frac{D_2\sin^3{d_{02}}\q_2}{D_3\sin^3{d_{03}}\sin^3{d_{12}}}+\frac{m_3\q_3}{\sin^3{d_{13}}}\biggr) \cdot \v_{1j}\\
&=-\frac{m_3}{D_3\sin^3{d_{13}}}(D_0\q_0+D_2\q_2-D_3\q_3)\cdot \v_{1j}
=-\frac{m_3}{D_3\sin^3{d_{13}}} D_1\q_1\cdot v_{1j}=0,
\end{align*}
the second last equality following by the previous lemma.

Through similar computations, we can see that for $j=2,3$,
\begin{align*}
\F_2\cdot \v_{2j}&=-\frac{m_3}{D_3\sin^3{d_{23}}}(D_0\q_0-D_1\q_1-D_3\q_3)\cdot \v_{2j}=\frac{m_3D_2}{D_3\sin^3{d_{23}}}\q_2 \cdot \v_{2j}=0,\\
\F_3 \cdot \v_{3j}&=-\frac{m_3\sin^3{d_{01}}}{D_3\sin^3{d_{13}}\sin^3{d_{03}}}(D_0\q_0-D_1\q_1+D_2\q_2)\cdot \v_{3j}\\
&=-\frac{m_3\sin^3{d_{01}}}{\sin^3{d_{13}}\sin^3{d_{03}}}\q_3\cdot \v_{3j}=0.\end{align*}
Therefore $\F_i=\mathbf{0}$ for $i=0,1,2,3$.
To show that the masses are positive, we first show that $D_i \neq 0$, for $i=0,1,2,3$. If not, then three of the masses lie in a great circle of $\S^2_{xyz}$, so the four masses all lie in one hemisphere. 
\smallskip

Without loss of generality, assume $D_3>0$. Consider the two-dimensional subspace $V_{12}$ determined by $\q_1, \q_2$. Since the configuration is not in one hemisphere, $V_{12}$ must separate $\q_0$ and $\q_3$. Then $$D_3=\det(\q_0, \q_1, \q_2)=\det(\q_1, \q_2 ,\q_0)>0\text{ implies }D_0=\det(\q_1,\q_2,\q_3)<0.$$
Similarly, the subspace $V_{02}$ separates $\q_1$ and $\q_3$, so $$\det(\q_0, \q_2, \q_1)=-\det(\q_0, \q_1, \q_2)=-D_3<0\text{ implies }D_1=\det(\q_0, \q_2, \q_3)>0,$$
and the subspace $V_{01}$ separates $\q_2$ and $\q_3$, so $$\det(\q_0, \q_1 \q_2)=D_3>0\text{ implies } D_2=\det(\q_0, \q_1, \q_3) <0.$$
Then $m_0>0, m_1>0, m_2>0$ if and only if $m_3>0$, so $\q$ is a special central configuration.
\end{proof}

\subsection{Special central configurations for five bodies in $\S^3$}

In this section we generalize the method from the previous proof from tetrahedra in $\S^2_{xyz}$ to pentatopes in $\S^3$ to prove the following result.
\begin{proposition} Let $\q$ be a pentatope configuration for five masses, $m_0, m_1, m_2$, $m_3, m_4,$ in $\S^3$ of the form 
$$\q_0=\begin{bmatrix}1\\0\\0\\0\end{bmatrix}, \q_1=\begin{bmatrix}x_1\\y_y\\0\\0\end{bmatrix}, \q_2=\begin{bmatrix}x_2\\y_2\\z_2\\0\end{bmatrix}, \q_3=\begin{bmatrix}x_3\\y_3\\z_3\\w_3\end{bmatrix}, \q_4=\begin{bmatrix}x_4\\y_4\\z_4\\w_4\end{bmatrix}.$$ 
Then $\q$ is a special central configuration if and only if the following conditions are satisfied:
\begin{enumerate}
\item $\q_0, \q_1, \q_2, \q_3, \q_4$ are not all in one hemisphere;
\item $\frac{\sin{d_{01}}}{\sin{d_{04}}}=\frac{\sin{d_{12}}}{\sin{d_{24}}}=\frac{\sin{d_{13}}}{\sin{d_{34}}}$;
\item $\frac{\sin{d_{02}}}{\sin{d_{04}}}=\frac{\sin{d_{12}}}{\sin{d_{14}}}=\frac{\sin{d_{23}}}{\sin{d_{34}}}$;
\item $\frac{\sin{d_{03}}}{\sin{d_{04}}}=\frac{\sin{d_{13}}}{\sin{d_{14}}}=\frac{\sin{d_{23}}}{\sin{d_{24}}}$;
\item $m_0=m_4\frac{D_0\sin^3{d_{01}}}{D_4\sin^3{d_{14}}}$, $m_1=-m_4\frac{D_1\sin^3{d_{01}}}{D_4\sin^3{d_{04}}}$, $m_2=m_4\frac{D_2\sin^3{d_{02}}}{D_4\sin^3{d_{04}}}$, and $m_3=-m_4\frac{D_3\sin^3{d_{03}}}{D_4\sin^3{d_{04}}}$, where $$D_0=\det(\q_1,\q_2,\q_3,\q_4), D_1=\det(\q_0,\q_2,\q_3,\q_4), D_2=\det(\q_0, \q_1, \q_3, \q_4),$$  $$D_3=\det(\q_0, \q_1, \q_2, \q_4),\text{ and }D_4=\det(\q_0, \q_1, \q_2, \q_3).$$
\end{enumerate}
\end{proposition}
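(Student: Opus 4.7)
The plan is to follow exactly the template of the preceding 4-body proposition, replacing the tetrahedron in $\S^2_{xyz}$ by the pentatope in $\S^3$ and invoking the lemma above with $n=4$ in place of $n=3$. Applied to the five vectors $\q_0,\q_1,\q_2,\q_3,\q_4 \in \R^4$, the lemma yields the key identity
$$D_0\q_0 - D_1\q_1 + D_2\q_2 - D_3\q_3 + D_4\q_4 = 0,$$
which is the engine of the sufficiency direction.

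For the necessity direction, I would exploit the chosen coordinate normalisation directly. Since $\q_0,\q_1,\q_2$ all have vanishing $w$-coordinate, the $w$-component of $\F_i=\mathbf{0}$ for $i \in \{0,1,2\}$ collapses to a two-term equation $m_3w_3/\sin^3 d_{i3} + m_4w_4/\sin^3 d_{i4} = 0$, and eliminating $m_3w_3$ and $m_4w_4$ from the three resulting equations immediately gives condition~(4). Conditions~(3) and (2) then follow by the same argument applied after an $SO(4)$ rotation that places the triples $\{\q_0,\q_1,\q_3\}$ and $\{\q_0,\q_2,\q_3\}$, respectively, into a common coordinate hyperplane, the distances $d_{ij}$ being rotation-invariant. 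To obtain the mass formulas in~(5), I would solve successively for $m_3/m_4$, $m_2/m_4$, $m_1/m_4$, and $m_0/m_4$ from the $w$-, $z$-, and $y$-components of $\F_0$ together with an inner product of $\F_1$ with a vector in the $xy$-plane orthogonal to $\q_1$; a direct cofactor expansion of the matrices defining $D_0,\ldots,D_4$ in the normalised coordinates identifies the resulting quotients of coordinates with the stated ratios of determinants.

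For the sufficiency direction, assume conditions~(1)--(5), fix $i$, and verify $\F_i=\mathbf{0}$. Because $\F_i \cdot \q_i \equiv 0$ by the $0$-homogeneity of $\overline{U}$, it suffices to show that $\F_i$ is parallel to $\q_i$. Substituting the mass formulas from~(5) and using conditions~(2), (3), and~(4) to rewrite every $\sin^3 d_{ij}$ in the denominators in terms of a single common factor $\sin^3 d_{i\ell}$, the expression $\F_i$ reduces to a scalar multiple of $D_0\q_0 - D_1\q_1 + D_2\q_2 - D_3\q_3 + D_4\q_4$ plus a term parallel to $\q_i$. The first part vanishes by the lemma, so $\F_i \parallel \q_i$ and hence $\F_i = \mathbf{0}$.

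For positivity of the masses, I would mimic the hyperplane-separation argument at the end of the previous proposition. Condition~(1) forces, for each triple $\{\q_{i_1},\q_{i_2},\q_{i_3}\}$, the remaining two vectors to lie strictly on opposite sides of the $3$-dimensional subspace through the origin spanned by the triple; each such separation pins down the sign of the corresponding $D_k$ relative to the others. Iterating across the four relevant triples fixes the signs of $D_0,D_1,D_2,D_3$ in terms of that of $D_4$ so that the alternating signs in~(5) make every $m_k$ positive. I expect this sign bookkeeping to be the main obstacle: in $\R^4$ one must track more hyperplanes than in the 4-body case and verify that all four sign comparisons are simultaneously compatible with a single choice of the overall sign of $m_4$.
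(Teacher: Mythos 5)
Your proposal follows the paper's own proof essentially step for step: the same two-term $w$-component equations (with symmetry/relabelling, which you justify via $SO(4)$-invariance, exactly as the paper implicitly does) for conditions (2)--(4), the same successive elimination from the components of $\F_0$ and the inner product with $(y_1,-x_1,0,0)^T$ for the mass ratios, the Cramer-rule lemma with $n=4$ giving $D_0\q_0-D_1\q_1+D_2\q_2-D_3\q_3+D_4\q_4=0$ for sufficiency, and the same hyperplane-separation sign analysis (only the four subspaces $V_{123}, V_{023}, V_{013}, V_{012}$ pairing each $\q_k$ against $\q_4$ are needed) for positivity. The sign bookkeeping you flag as the main obstacle works out exactly as in the 4-body case: taking $D_4>0$ forces $D_0>0$, $D_1<0$, $D_2>0$, $D_3<0$, matching the alternating signs in condition (5).
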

\begin{proof}
Suppose $\q$ is a special central configuration. Then clearly the five masses are not all in one hemisphere, and we have $\F_i=0$ for $i=0,1,2,3,4$. Consider the $w$ components of $\F_0, \F_1$, and $\F_2$, $$\frac{m_3w_3}{\sin^3{d_{03}}}+\frac{m_4w_4}{\sin^3{d_{04}}}=0, \frac{m_3w_3}{\sin^3{d_{13}}}+\frac{m_4w_4}{\sin^3{d_{14}}}=0, \frac{m_3w_3}{\sin^3{d_{23}}}+\frac{m_4w_4}{\sin^3{d_{34}}}=0.$$ Since we are assuming $\q$ does not lie on a great sphere, $y_1, z_2, w_3, w_4$ are non-zero, so \begin{align*}\frac{\sin{d_{03}}}{\sin{d_{04}}}=\frac{\sin{d_{13}}}{\sin{d_{14}}}=\frac{\sin{d_{23}}}{\sin{d_{24}}}.\\
\intertext{By symmetry and a relabling of the masses, we also obtain the relations}
\frac{\sin{d_{01}}}{\sin{d_{04}}}=\frac{\sin{d_{12}}}{\sin{d_{24}}}=\frac{\sin{d_{13}}}{\sin{d_{34}}}\\
\intertext{and}
\frac{\sin{d_{02}}}{\sin{d_{04}}}=\frac{\sin{d_{12}}}{\sin{d_{14}}}=\frac{\sin{d_{23}}}{\sin{d_{34}}}.
\end{align*} 
If we look at 
$$
\F_0=m_1\frac{\q_1-\cos{d_{01}}\q_0}{\sin^3{d_{01}}}+m_2\frac{\q_2-\cos{d_{02}}\q_0}{\sin^3{d_{02}}}+m_3\frac{\q_3-\cos{d_{03}}\q_0}{\sin^3{d_{03}}}+m_4\frac{\q_4-\cos{d_{04}}\q_0}{\sin^3{d_{04}}},
$$
which is $\mathbf{0}$,
we see that the $w$ component $\frac{m_3w_3}{\sin^3{d_{03}}}+\frac{m_4w_4}{\sin^3{d_{04}}}=0$ gives 
\begin{equation}
\label{pm3} m_3=-m_4\frac{w_4\sin^3{d_{03}}}{w_3\sin^3{d_{04}}}=-m_4\frac{y_1z_2w_4\sin^3{d_{03}}}{y_1z_2w_3\sin^3{d_{04}}}=-m_4\frac{D_3\sin^3{d_{03}}}{D_4\sin^3{d_{04}}}.\end{equation}
After substituting (\ref{pm3}) into the $z$ component $\frac{m_2z_2}{\sin^3{d_{02}}}+\frac{m_3z_3}{\sin^3{d_{03}}}+\frac{m_4z_4}{\sin^3{d_{04}}}=0$, we have \begin{equation}
\label{pm2} m_2=m_4\frac{y_1(z_3w_4-z_4w_3)\sin^3{d_{02}}}{y_1z_2w_3\sin^3{d_{04}}}=m_4\frac{D_2\sin^3{d_{02}}}{D_4\sin^3{d_{04}}}.\end{equation}
After substituting (\ref{pm3}) and (\ref{pm2}) into the $y$ component $\frac{m_1y_1}{\sin^3{d_{01}}}+\frac{m_2y_2}{\sin^3{d_{02}}}+\frac{m_3y_3}{\sin^3{d_{03}}}+\frac{m_4y_4}{\sin^3{d_{04}}}=0$, we obtain\begin{equation}
\label{pm1} m_1=-m_4\frac{y_2(z_3w_4-z_4w_3)-z_2(y-3w_4-y_4w_3)}{y_1z_2w_3}\frac{\sin^3{d_{01}}}{\sin^3{d_{04}}}=-m_4\frac{D_1\sin^3{d_{01}}}{D_4\sin^3{d_{04}}}.\end{equation}
We obtain $m_0$ by taking the inner product of $(y_1,-x_1,0,0)^T$ with \begin{align*}\F_1=&m_0\frac{\q_0-\cos{d_{01}}\q_1}{\sin^3{d_{01}}}+m_2\frac{\q_2-\cos{d_{12}}\q_1}{\sin^3{d_{12}}}\\
&+m_3\frac{\q_3-\cos{d_{13}}\q_1}{\sin^3{d_{13}}}+m_4\frac{\q_4-\cos{d_{14}}\q_1}{\sin^3{d_{14}}}=\mathbf{0}\end{align*}
to get \begin{align*}
0=&\frac{m_0y_1}{\sin^3{d_{01}}}+m_2\frac{x_2y_1-x_1y_2}{\sin^3{d_{12}}}+m_3\frac{x_3y_1-x_1y_3}{\sin^3{d_{13}}}+m_4\frac{x_4y_1-x_1y_4}{\sin^3{d_{14}}}\\
=&\frac{m_0y_1}{\sin^3{d_{01}}}+m_4\frac{(x_2y_1-x_1y_2)(z_3w_4-z_4w_3)\sin^3{d_{02}}}{z_2w_3\sin^3{d_{12}}\sin^3{d_{04}}}\\
&-m_4\frac{(x_3y_1-x_1y_3)z_2w_4\sin^3{d_{03}}}{z_2w_3\sin^3{d_{13}}\sin^3{d_{04}}}+m_4\frac{(x_4y_1-x_1y_4)z_2w_3}{z_2w_3\sin^3{d_{14}}}\\
=&\frac{m_0y_1}{\sin^3{d_{01}}}-m_4\frac{(x_1y_2-x_2y_1)(z_3w_4-z_4w_3)+(x_1y_4-x_4y_1)(w_3-w_4)z_2}{\sin^3{d_{14}}}.\end{align*}
Then we have \begin{align}
\label{pm0}m_0&=m_4\frac{(x_1y_2-x_2y_1)(z_3w_4-z_4w_3)+(x_1y_3-x_3y_1)(w_3-w_4)z_2}{y_1z_2w_3}\frac{\sin^3{d_{01}}}{\sin^3{d_{14}}}\\
\nonumber&=m_4\frac{D_0\sin^3{d_{01}}}{D_4\sin^3{d_{14}}}.\end{align}
\medskip

Conversely, Suppose that $\q$ is a configuration which satisfies the above 5 conditions. We now prove that $\q$ is a special central configuration, i.e., $\F_i=\mathbf{0}$, $m_i>0$ for $i=0,1,2,3,4$. $\F_i \cdot \q_i$ is clearly zero, and $\F_0$ is zero since $e_1=\q_0$ and $\F_0\cdot e_2=\F_0\cdot e_3=\F_0\cdot e_4=0$ by (\ref{pm3})-(\ref{pm1}).
\smallskip

For $i=1,2,3,4$, $\F_i=0$ if and only if $\F_i \cdot \v_{ij}=0$, $j=1,2,3,4$, where $\v_{i1}=\q_i$, and $\{\v_{i1}, \v_{i2}, \v_{i3}, \v_{i4}\}$ form an orthonormal basis of $\R^3$. As shown above, $\F_i \cdot \v_{i1}=0$. For $i=1, j=2,3,4$, we have \begin{align*}
\F_1 \cdot \v_{1j}= &\biggl(m_0\frac{\q_0-\cos{d_{01}}\q_1}{\sin^3{d_{01}}}+m_2\frac{\q_2-\cos{d_{12}}\q_1}{\sin^3{d_{12}}}\\
&+m_3\frac{\q_3-\cos{d_{13}}\q_1}{\sin^3{d_{13}}}+m_4\frac{\q_4-\cos{d_{14}}\q_1}{\sin^3{d_{14}}}\biggr)\cdot  \v_{1j}\\
=&\biggl(\frac{m_0\q_0}{\sin^3{d_{01}}}+\frac{m_2\q_2}{\sin^3{d_{12}}}+\frac{m_3\q_3}{\sin^3{d_{13}}}+\frac{m_4\q_4}{\sin^3{d_{14}}}\biggr)\cdot \v_{1j}\\
=&\biggl(m_4\frac{D_0\q_0}{D_4\sin^3{d_{14}}}+m_4\frac{D_2\sin^3{d_{02}}\q_2}{D_4\sin^3{d_{04}}\sin^3{d_{12}}}\\
&-m_4\frac{D_3\sin^3{d_{03}}\q_3}{D_4\sin^3{d_{04}}\sin^3{d_{13}}}+\frac{m_4\q_4}{\sin^3{d_{14}}}\biggr)\cdot \v_{1j}\\
=&\frac{m_4}{D_4\sin^3{d_{14}}}(D_0\q_0+D_2\q_2-D_3\q_3+D_4\q_4)\cdot \v_{1j}\\
=&\frac{m_4D_1}{D_4\sin^3{d_{14}}}\q_1\cdot \v_{1j}=0.
\end{align*}
Through similar computations, we see that for $j=2,3,4$, \begin{align*}
\F_2\cdot \v_{2j}&=\frac{m_4}{D_4\sin^3{d_{24}}}(D_0\q_0-D_1\q_1-D_3\q_3+D_4\q_4)\cdot \v_{2j}=\frac{-m_4D_2}{D_4\sin^3{d_{24}}}\q_2\cdot \v_{2j}=0,\\
\F_3\cdot \v_{3j}&=\frac{m_4}{D_4\sin^3{d_{34}}}(D_0\q_0-D_1\q_1+D_2\q_2+D_4\q_4)\cdot \v_{3j}=\frac{m_4D_3}{D_4\sin^3{d_{23}}}\q_3\cdot \v_{3j}=0,\\
\intertext{and}
\F_4 \cdot \v_{4j}&=\frac{m_4\sin^3{d_{01}}}{D_4\sin^3{d_{14}}\sin^3{d_{04}}}(D_0\q_0-D_1\q_1+D_2\q_2-D_3\q_3)\cdot \v_{4j}\\
&=\frac{-m_4\sin^3{d_{01}}}{\sin^3{d_{14}}\sin^3{d_{04}}}\q_3\cdot \v_{4j}=0.\end{align*}
Therefore $\F_i=0$ for $i=0,1,2,3,4$. To show that the masses are positive, we first show that $D_i \neq 0$ for $i=0,1,2,3,4$. If not, then four of the masses lie in a great sphere, so the five masses all lie in one hemisphere.
\smallskip

Without loss of generality, assume $D_4>0$. Consider the 3-dimensional subspace $V_{123}$. Since the configuration is not in one hemisphere, $V_{123}$ must separate $\q_0$ and $\q_4$. Then 
$$
\det(\q_1, \q_2, \q_3, \q_0)=-\det(\q_0, \q_1, \q_2, \q_3)=-D_4<0
$$
implies that 
$$
D_0=\det(\q_1, \q_2, \q_3, \q_4)>0.
$$
Similarly, the subspace $V_{023}$ separates $\q_1$ and $\q_4$, so 
$$
\det(\q_0, \q_2, \q_3, \q_1)=\det(\q_0, \q_1, \q_2, \q_3)=D_4>0
$$
implies that
$$
D_1=\det(\q_0, \q_2, \q_3, \q_4)<0;
$$
The subspace $V_{013}$ separates $\q_2$ and $\q_4$, so 
$$
\det(\q_0, \q_1, \q_3, \q_2)=-\det(\q_0, \q_1, \q_2, \q_3)=-D_4<0
$$
implies that
$$
D_2=\det(\q_0, \q_1, \q_3, \q_4)>0;
$$
and the subspace $V_{012}$ separates $\q_3$ and $\q_4$, so 
$$
\det(\q_0, \q_1, \q_2, \q_3)=D_4>0
$$
implies that
$$
D_3=\det(\q_0, \q_1, \q_2, \q_4)<0.
$$
Then $m_0>0, m_1>0, m_2>0, m_3>0$ if and only if $m_4>0$, so $\q$ is a special central configuration. This remark completes the proof.
\end{proof}

\bigskip

\noindent{\bf Acknowledgments.} This research was supported in part by an USRA Fellowship from NSERC of Canada (for Eric Boulter), a Discovery Grant from the same institution (for Florin Diacu), as well as a University of Victoria Scholarship and a Geoffrey Fox Graduate Fellowship (for Shuqiang Zhu).


\end{document}